\DeclareSymbolFont{AMSb}{U}{msb}{m}{n}
\documentclass[10pt,a4paper,leqno,noamsfonts]{amsart}
\linespread{1.15}
\usepackage[foot]{amsaddr}
   \makeatletter 
   \renewcommand\@biblabel[1]{#1.}
   \makeatother
\usepackage[english]{babel}
\usepackage[dvipsnames]{xcolor}
\usepackage{graphicx,pifont}
\usepackage[utopia]{mathdesign}
\usepackage[a4paper,top=2.8cm,bottom=2.8cm,
    left=2.8cm,right=2.8cm,marginparwidth=60pt]{geometry}
\usepackage[utf8]{inputenc}
\usepackage{braket,caption,comment,mathtools,stmaryrd}
\usepackage[usestackEOL]{stackengine}
\usepackage{multirow,booktabs,microtype,relsize}
\usepackage[colorlinks,bookmarks]{hyperref} %
      \hypersetup{colorlinks,%
            citecolor=britishracinggreen,%
            filecolor=black,%
            linkcolor=cobalt,%
            urlcolor=cornellred}
      \setcounter{tocdepth}{1}
      \setcounter{section}{-1}
      \numberwithin{equation}{section}
\usepackage[capitalise]{cleveref}
\DeclareSymbolFont{usualmathcal}{OMS}{cmsy}{m}{n}
\DeclareSymbolFontAlphabet{\mathcal}{usualmathcal}

% Notes in bubble
% put 'final' instead of 'draft' to inhibite the package
% put 'disable' to inhibite the package
\usepackage[colorinlistoftodos]{todonotes}%textwidth=35mm
%\usepackage[textwidth=35mm,colorinlistoftodos,disable]{todonotes}
%\setlength{\marginparwidth}{1in}
%\setlength{\marginparwidth}{2cm}
% A page-wide note with stuff to be added later.

% Notes in a coloured bubble.

\definecolor{cornellred}{rgb}{0.7, 0.11, 0.11}
\definecolor{britishracinggreen}{rgb}{0.0, 0.26, 0.15}
\definecolor{cobalt}{rgb}{0.0, 0.28, 0.67}

% Letters

\newcommand{\BA}{{\mathbb{A}}}

\newcommand{\BC}{{\mathbb{C}}}

\newcommand{\BE}{{\mathbb{E}}}

\newcommand{\BL}{{\mathbb{L}}}

\newcommand{\BP}{{\mathbb{P}}}
\newcommand{\BQ}{{\mathbb{Q}}}

\newcommand{\BZ}{{\mathbb{Z}}}

\newcommand{\CM}{{\mathcal M}}

\newcommand{\simto}{\,\widetilde{\to}\,}
\newcommand{\st}{\mathrm{st}}
\newcommand{\into}{\hookrightarrow}
\newcommand{\onto}{\twoheadrightarrow}
\newcommand{\Hess}{\mathsf{Hess}}

\newcommand{\ch}{{\mathrm{ch}}}

\DeclareMathOperator{\Hilb}{Hilb}
\DeclareMathOperator{\Sets}{Sets}
\DeclareMathOperator{\Sch}{Sch}
\DeclareMathOperator{\Quot}{Quot}

\DeclareMathOperator{\Coh}{Coh}

\DeclareMathOperator{\Tr}{Tr}

\DeclareMathOperator{\Span}{Span}

\DeclareMathOperator{\ob}{ob}

\DeclareMathOperator{\Art}{Art}

%%%%%%%%%%%%%%%%%%%%%%%%%% Davison-Ricolfi commands %%%%%%

\DeclareMathOperator{\Rep}{Rep}
\DeclareMathOperator{\GL}{GL}

\DeclareMathOperator{\rk}{rk}
\DeclareMathOperator{\op}{op}

\renewcommand{\ss}{\operatorname{ss}}

\newcommand{\dd}{\mathrm{d}}

\newcommand{\crit}{\operatorname{crit}}

%%%%%%%%%%%%%%%%%%%%%%%%%%%%%%%%%%%%%%%%%%%%%%%%%%%%%%%%%%%%%

\DeclareFontFamily{OT1}{rsfs}{}
\DeclareFontShape{OT1}{rsfs}{n}{it}{<-> rsfs10}{}
\DeclareMathAlphabet{\curly}{OT1}{rsfs}{n}{it}

\newcommand\Ext{\operatorname{Ext}}
\newcommand\Hom{\operatorname{Hom}}
\newcommand\End{\operatorname{End}}

\DeclareMathOperator{\RRlHom}{{\mathbf{R}\mathscr Hom}}

\newcommand{\RR}{\mathbf R}

\newcommand\Spec{\operatorname{Spec}}
\newcommand\Supp{\operatorname{Supp}}

\newcommand\id{\operatorname{id}}

\newcommand{\HH}{\mathrm{H}}
\newcommand{\OO}{\mathscr O}
\newcommand{\Fram}{\mathsf{Fr}}

%%%%%%%%%%%%%%%%%%%%%%%%%% PROOFOF
\makeatletter

\makeatother

%%%%%%%%%%%%%%%%%%%%%%%%%
\usepackage[all]{xy}
\usepackage{tikz}
\usepackage{tikz-cd}
\usepackage{adjustbox}
\usepackage{rotating}
\usepackage{comment}
\newcommand*{\isoarrow}[1]{\arrow[#1,"\rotatebox{90}{\(\sim\)}"
]}
\usetikzlibrary{matrix,shapes,intersections,arrows,decorations.pathmorphing}
\tikzset{commutative diagrams/arrow style=math font}
\tikzset{commutative diagrams/.cd,
mysymbol/.style={start anchor=center,end anchor=center,draw=none}}

\tikzset{
shift up/.style={
to path={([yshift=#1]\tikztostart.east) -- ([yshift=#1]\tikztotarget.west) \tikztonodes}
}
}

%%%%%%%%%%%%%%%%%%%%%%%%%%%%%%%%%%%

\theoremstyle{definition}

\newtheorem*{lemma*}{Lemma}
\newtheorem*{theorem*}{Theorem}
\newtheorem*{example*}{Example}
\newtheorem*{fact*}{Fact}
\newtheorem*{notation*}{Notation}
\newtheorem*{definition*}{Definition}
\newtheorem*{prop*}{Proposition}
\newtheorem*{remark*}{Remark}
\newtheorem*{corollary*}{Corollary}

\newtheorem*{conventions*}{Conventions}

\newtheorem{definition}{Definition}[section]

\newtheorem{example}[definition]{Example}

\newtheorem{remark}[definition]{Remark}
\newtheorem{conjecture}[definition]{Conjecture}

\newtheoremstyle{thm} % <name> % (ambienti con dimostrazione)
        {3mm}% <Space above>
        {3mm}% <Space below>
        {\slshape}% <Body font> % 
        {0mm}% <Indent amount>
        {\bfseries}% <Theorem head font>
        {.}% <Punctuation after theorem head>
        {1mm}% <Space after theorem head>
        {}% <Theorem head spec (can be left empty, meaning 'normal')> 
\theoremstyle{thm}
\newtheorem{theorem}[definition]{Theorem}
\newtheorem{corollary}[definition]{Corollary}
\newtheorem{lemma}[definition]{Lemma}
\newtheorem{prop}[definition]{Proposition}
\newtheorem{thm}{Theorem}

\newcommand{\bfk}{\mathbf{k}}

\title[Framed sheaves on projective space and Quot schemes]{Framed sheaves on projective space and Quot schemes}

\author[Alberto Cazzaniga]{Alberto Cazzaniga}
\address{AREA Science Park, Localit\'{a} Padriciano 99, 34149 Trieste, Italy}
\email{alberto.cazzaniga@areasciencepark.it}

\author[Andrea T. Ricolfi]{Andrea T. Ricolfi}
\address{Scuola Internazionale Superiore di Studi Avanzati (SISSA), Via Bonomea 265, 34136 Trieste, Italy}
\email{aricolfi@sissa.it}

\begin{document}
\maketitle

\begin{abstract}
We prove that, given integers $m\geq 3$, $r\geq 1$ and $n\geq 0$, the moduli space of torsion free sheaves on $\mathbb P^m$ with Chern character $(r,0,\ldots,0,-n)$ that are trivial along a hyperplane $D \subset \BP^m$ is isomorphic to the Quot scheme $\Quot_{\mathbb A^m}(\mathscr O^{\oplus r},n)$ of $0$-dimensional length $n$ quotients of the free sheaf $\mathscr O^{\oplus r}$ on $\mathbb A^m$. The proof goes by comparing the two tangent-obstruction theories on these moduli spaces.
\end{abstract}

{\hypersetup{linkcolor=black}
\tableofcontents}

\subsection*{Key words} Quot schemes, Framed sheaves, Deformation Theory, tangent-obstruction theories, moduli of sheaves.

\subsection*{2020 MSC} 14C05, 14N35.

\section{Introduction}

This paper builds an identification between two classical moduli spaces in algebraic geometry: the \emph{moduli space of framed sheaves} on projective space $\BP^m$ and \emph{Grothendieck's Quot scheme}. Unless stated otherwise, we work over an algebraically closed field $\bfk$ of characteristic $0$. If $D\subset Y$ is a divisor on a projective variety $Y$, a $D$-framed sheaf on $Y$ is a pair $(E,\phi)$ where $E$ is a torsion free sheaf on $Y$ and $\phi$ is an isomorphism $E|_D \simto \OO_D^{\oplus r}$, where $r = \rk E$. Such pairs $(E,\phi)$ are a special case of the more general notion of \emph{framed modules} introduced by Huybrechts--Lehn \cite{framed_modules}. 

For a fixed coherent sheaf $V$ on $Y$, the Quot scheme $\Quot_Y(V,P)$ parametrises quotients $V\onto Q$ such that $Q$ has Hilbert polynomial $P$. If $P$ is a constant polynomial, the Quot scheme also exists (as a quasiprojective scheme) for quasiprojective varieties. For instance, if $P=n \in \BZ_{\geq 0}$, we have a natural open immersion $\Quot_{\BA^m}(\OO^{\oplus r},n) \into \Quot_{\BP^m}(\OO^{\oplus r},n)$. 

\smallbreak
The following is the main result of this paper, proved in \Cref{main_thm_body} in the main body of the text.

\begin{thm}\label{main_thm}
Fix integers $m\geq 2$, $r\geq 1$ and $n\geq 0$. Fix a hyperplane $D\subset \BP^m$. Let $\mathrm{Fr}_{r,n}(\BP^m)$ be the moduli space of $D$-framed sheaves on $\BP^m$ with Chern character $(r,0,\ldots,0,-n)$. There is an injective morphism 
\[
\eta\colon \Quot_{\BA^m}(\OO^{\oplus r},n) \to \mathrm{Fr}_{r,n}(\BP^m)
\]
which is an isomorphism if and only if $m\geq 3$ or $(m,r)=(2,1)$.
\end{thm}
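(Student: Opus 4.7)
The plan is to realise $\eta$ as an extension-by-zero construction, prove that $\eta$ is an open immersion by comparing tangent–obstruction theories, and finally determine when it is also surjective on closed points. For a flat family $\OO_{\BA^m\times S}^{\oplus r}\twoheadrightarrow\CQ$ of length-$n$ quotients over a $\bfk$-scheme $S$, I will push $\CQ$ forward along the open immersion $\BA^m\subset\BP^m$ to a flat family $\bar\CQ$ of $0$-dimensional sheaves on $\BP^m$ with support disjoint from $D$, and take $\CE=\ker(\OO_{\BP^m\times S}^{\oplus r}\twoheadrightarrow\bar\CQ)$. Then $\CE$ is $S$-flat and fibrewise torsion-free of Chern character $(r,0,\ldots,0,-n)$, and $\CE|_{D\times S}\simto\OO_{D\times S}^{\oplus r}$ canonically because $\bar\CQ|_{D\times S}=0$, which supplies the framing. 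This defines $\eta$, and injectivity on $S$-points follows at once: given $(E,\phi)$ in the image, the quotient is $\OO_{\BA^m}^{\oplus r}\twoheadrightarrow\OO_{\BA^m}^{\oplus r}/E|_{\BA^m}$, independently of $\phi$.

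For \'etaleness, I would set $(E,\phi)=\eta([q])$ and $K=\ker q$, and compare the tangent–obstruction spaces $\Hom_{\BA^m}(K,Q)$, $\Ext^1_{\BA^m}(K,Q)$ of $\Quot_{\BA^m}(\OO^{\oplus r},n)$ with $\Ext^1_{\BP^m}(E,E(-D))$, $\Ext^2_{\BP^m}(E,E(-D))$ of $\mathrm{Fr}_{r,n}(\BP^m)$. The comparison proceeds by applying $\RHom_{\BP^m}(E,-)$ to
\[
0\to E(-D)\to\OO_{\BP^m}^{\oplus r}(-D)\to\bar Q\to 0
\]
(valid because $\bar Q(-D)\cong\bar Q$ since $\bar Q$ is supported in $\BA^m$), identifying $\Ext^i_{\BP^m}(E,\bar Q)\cong\Ext^i_{\BA^m}(K,Q)$ via extension by zero, and invoking the key vanishing $\Ext^i_{\BP^m}(E,\OO(-D))=0$ for $i\neq m-1$, which follows by applying $\RHom(-,\OO(-D))$ to the defining sequence of $E$ together with $H^j(\BP^m,\OO(-1))=0$ for all $j$. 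For $m\geq 3$ the extraneous term lies above the relevant degrees, and the long exact sequence directly produces a canonical isomorphism $\Hom(K,Q)\simto\Ext^1(E,E(-D))$ and a compatible injection $\Ext^1(K,Q)\hookrightarrow\Ext^2(E,E(-D))$, so $\eta$ is \'etale; for $m=2$ the extraneous term $\Ext^1(E,\OO(-D))$ is non-zero, but one can conclude by a direct dimension count that the relevant connecting map vanishes and the tangent spaces still agree. Combined with injectivity on points, this shows $\eta$ is an open immersion.

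It remains to establish surjectivity on closed points when $m\geq 3$ or $(m,r)=(2,1)$. When $m\geq 3$, the vanishings $\Hom_{\BP^m}(E,\OO(-D))=\Ext^1_{\BP^m}(E,\OO(-D))=0$ imply that the composite $E\twoheadrightarrow E|_D\stackrel{\phi}{\to}\OO_D^{\oplus r}$ lifts uniquely to $\tilde\phi\colon E\to\OO_{\BP^m}^{\oplus r}$; since $\tilde\phi|_D=\phi$ is an isomorphism, $\tilde\phi$ is injective with $0$-dimensional cokernel $Q$ of length $n$ supported in $\BA^m$, giving the preimage. For $(m,r)=(2,1)$, I would argue instead via the classification of rank-$1$ torsion-free sheaves on $\BP^2$: any such sheaf with $c_1=0$ is an ideal sheaf $I_{\bar Z}$, the framing condition forces $\bar Z\subset\BA^2$, and the $\bfk^\times$-ambiguity in $\phi$ is absorbed by $\operatorname{Aut}(I_{\bar Z})\simeq\bfk^\times$. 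For $m=2$ and $r\geq 2$, the ADHM description presents $\mathrm{Fr}_{r,n}(\BP^2)$ as a Nakajima quiver variety strictly containing $\Quot_{\BA^2}(\OO^{\oplus r},n)$ as the locus with vanishing $j$-component, so $\eta$ is not surjective.

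I expect the delicate point to be the tangent–obstruction comparison in the borderline case $(m,r)=(2,1)$, where $\Ext^1_{\BP^m}(E,\OO(-D))$ does not vanish and yet has to be irrelevant for the tangent identification. A convenient resolution should be Fogarty smoothness of $\Hilb^n(\BA^2)$, which forces the equality of tangent dimensions and hence the vanishing of the problematic connecting map; lifting this pointwise identification to a families-level isomorphism of schemes, compatible with the open-immersion property already established, is the remaining technical piece.
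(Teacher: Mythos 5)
Your architecture is essentially the paper's: construct $\eta$ by extension by zero, compare tangent--obstruction theories via the long exact sequence obtained by applying $\Hom(E,-)$ to $0\to E(-D)\to\OO_{\BP^m}(-D)^{\oplus r}\to Q\to 0$, conclude formal \'etaleness, and treat surjectivity on closed points separately, with the ADHM picture ruling out surjectivity for $(m,r)=(2,r\geq 2)$. However, there are two genuine problems. First, your surjectivity argument for $m\geq 3$ is circular as written: you lift the framing $E\to\iota_\ast\OO_D^{\oplus r}$ along $\OO_{\BP^m}^{\oplus r}\onto\iota_\ast\OO_D^{\oplus r}$ using the vanishing $\Hom(E,\OO(-D))=\Ext^1(E,\OO(-D))=0$, but your only derivation of that vanishing applies $\RHom(-,\OO(-D))$ to the ``defining sequence'' $0\to E\to\OO_{\BP^m}^{\oplus r}\to Q\to 0$ --- which, for an arbitrary $D$-framed sheaf, is precisely the object you are trying to produce. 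The gap is repairable: one can prove $\HH^{m}(\BP^m,E(-m))=\HH^{m-1}(\BP^m,E(-m))=0$ (equivalently, by Serre duality, the two $\Ext$ groups you need) for \emph{any} torsion free $E$ with $E|_D\cong\OO_D^{\oplus r}$, using only the restriction sequence $0\to E(k-1)\to E(k)\to\iota_\ast\iota^\ast E(k)\to 0$ and Serre vanishing; this is what the paper does. With that independent input, your lifting argument becomes a legitimate (and rather clean) alternative to the paper's route through the Abe--Yoshinaga splitting criterion for reflexive sheaves --- though you should still justify that the lift $\tilde\phi$ is injective (it is an isomorphism near $D$, and $E$ is torsion free) and that $\coker\tilde\phi$ is $0$-dimensional (its support is closed in $\BP^m$ and disjoint from the hyperplane $D$).

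Second, your claim that for $m=2$ ``the tangent spaces still agree'' and hence $\eta$ is an open immersion is false for $r\geq 2$: the connecting map into $\Ext^1(E,\OO(-D)^{\oplus r})\cong\bfk^{rn}$ does not vanish, and at a general point $\dim\Hom(E,Q)=(r+1)n<2nr=\dim\Ext^1(E,E(-D))$, so the tangent map is a \emph{proper} inclusion. Indeed $\Quot_{\BA^2}(\OO^{\oplus r},n)$ sits inside the smooth $2nr$-dimensional $\mathrm{Fr}_{r,n}(\BP^2)$ as a singular closed subvariety of codimension $n(r-1)$, which an open immersion would contradict. This does not sink the theorem --- non-surjectivity for $m=2$, $r\geq 2$ follows from the ADHM description as you say, and the case $(2,1)$ goes through by your dimension count using smoothness of $\Hilb^n(\BA^2)$ (or, as in the paper, by Zariski's main theorem applied to a bijection between smooth irreducible $2n$-dimensional varieties) --- but the $m=2$ tangent-space claim must be restricted to $r=1$. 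Finally, note that passing from the isomorphism on tangent spaces and injection on obstruction spaces to formal \'etaleness uses a nontrivial statement about pro-representable deformation functors (the paper's five-lemma-type proposition for exact sequences of sets); it should be stated and proved or cited rather than absorbed into ``the long exact sequence directly produces''.
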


The map $\eta$, constructed in \Cref{prop:natural_transf}, is defined on closed points by 
\[
\begin{tikzcd}
\bigl[E \arrow[hook]{r}{i} & \OO_{\BP^m}^{\oplus r}\arrow[two heads]{r} & Q\bigr]
\end{tikzcd}
\,\,\mapsto \,\,\,\,\bigl(E,i|_{D}\bigr),
\]
where $Q$ is a $0$-dimensional coherent sheaf on $\BP^m$ supported away from $D$.
The fact that $\eta$ is not an isomorphism for $m=2$ (unless $r=1$) ultimately depends on the fact that on $\BP^2$ there are nontrivial vector bundles that are trivial on a line: this says that given a framed sheaf $(E,\phi)$ of rank $r>1$ on $\BP^2$, one may not be able to reconstruct an embedding $i\colon E \into \OO_{\BP^2}^{\oplus r}$, and this prevents $\eta$ from being surjective. In fact, the moduli space $\mathrm{Fr}_{r,n}(\BP^2)$ is a smooth variety of dimension $2nr$ containing $\Quot_{\BA^2}(\OO^{\oplus r},n)$ as an irreducible subvariety of dimension $(r+1)n$, which is singular as soon as $r,n>1$ (\Cref{ex:singular_quot}). 

\smallbreak
Donaldson \cite{donaldson1984} constructed a canonical identification between the moduli space of instantons on $S^{4}=\mathbb R^4 \cup \set{\infty}$ with $SU(r)$-framing at $\infty$ and the moduli space of rank $r$ holomorphic vector bundles on $\mathbb P^2$ trivial on a line $\ell_{\infty}$. He defined a partial compactification of the moduli space on the $4$-manifold side of the correspondence by allowing connections acquiring singularities. This in turn corresponds to considering torsion free sheaves on the algebro-geometric side, leading to the study of $\mathrm{Fr}_{r,n}(\BP^2)$. 

The $3$-dimensional analogue of Donaldson's construction has attracted lots of attention in string theory and hence, after translating in the language of algebraic geometry, in Donaldson--Thomas theory. For instance, in the work of Cirafici--Sinkovics--Szabo \cite[Sec.~4.1]{MR2478118}, the authors construct a correspondence between non-commutative $U(r)$-instantons on $\mathbb A^3$ and the $3$-dimensional analogue of Donaldson's construction, namely the moduli space $\mathrm{Fr}_{r,n}(\BP^3)$. They relate the construction to the quiver gauge theory of the `$r$-framed $3$-loop quiver' (Figure \ref{fig:framed_quiver}), which corresponds to $\Quot_{\BA^3}(\OO^{\oplus r},n)$ in a precise sense \cite{BR18}. We briefly review this story in \Cref{sec:quiver_gauge}. Moreover, the very same quiver gauge theory can be derived from the rank $r$ Donaldson--Thomas theory of $\BA^4$, as shown by Nekrasov and Piazzalunga in \cite{Magnificent_colors}. Theorem \ref{main_thm} formalises this correspondence from an algebraic perspective in the $3$-dimensional case, and extends it to higher dimensions.

\smallbreak
Framed sheaves and framed modules were mostly studied on \emph{surfaces}. We do not aim at giving an exhaustive list of references, but we refer the reader to \cite{Sala,framed_sheaves_surfaces} for a more complete bibliography.
Framed sheaves were also studied on $3$-folds by Oprea \cite{oprea2013:framed_threefolds}, where a symmetric obstruction theory on their moduli space is constructed --- we end \Cref{sec::isomorphism} with a conjecture suggesting that Oprea's obstruction theory might take a very explicit form (\Cref{conj:pot}). Quot schemes also received a lot of attention lately in enumerative geometry \cite{Oprea:2019ab,FMR_K-DT,Virtual_Quot,LocalDT}, and in the context of motivic invariants \cite{ricolfi2019motive,Mozgovoy:2019aa,DavisonR}.

\subsection*{Acknowledgments}
We wish to thank Francesco Bottacin, Ugo Bruzzo, Nadir Fasola, Abdelmoubine A.~Henni, Dragos Oprea and Francesco Sala for helpful discussions on framed modules and for providing interesting comments. Special thanks to Alexander Kuznetsov for suggesting several improvements on a preliminary version of this work. We owe a debt to Barbara Fantechi who suggested to us to use the infinitesimal approach employed in \Cref{sec::isomorphism}. A.C.~thanks B. Szendr\H{o}i for his help and several conversations on related topics throughout the years.  A.C.~thanks AREA Science Park and CNR-IOM for support and the excellent working conditions. A.R.~thanks Dipartimenti di Eccellenza for support and SISSA for the excellent working conditions.

%%%%%%%%%%%%%%%%%%%%%%%%%%%%%%%%%%%%%%%%%
\section{Framed modules and framed sheaves}
In this section we briefly review the notion of stability on framed modules introduced by Huybrechts--Lehn \cite{framed_modules}, and we show that $D$-framed sheaves on $\BP^m$ (Definition \ref{def:framed_sheaf_P^m}) are stable with respect to a suitable choice of stability parameters (Lemma \ref{lemma:framed_stable}). This implies the representability of their moduli functor.

%%%%%%%%%%%%%%%%%%%%%%%%%%%%%%%%%%%%%%%%%%%%%%%%%%
\subsection{Framed modules after Huybrechts--Lehn}
Let $Y$ be a smooth projective variety over an algebraically closed field $\bfk$ of characteristic $0$, and let $H$ be an ample divisor on $Y$. Fix a coherent sheaf $G$ on $Y$. A \emph{framed module} on $Y$, with `framing datum' $G$, is a pair $(E,\alpha)$, where $E$ is a coherent sheaf on $Y$ and $\alpha \colon E \to G$ is a homomorphism of $\OO_Y$-modules. The map $\alpha$ is called the \emph{framing}, whereas $\ker \alpha$ (resp.~$\rk E$) is called the \emph{kernel} (resp.~the \emph{rank}) of the framed module. Set $\epsilon(\alpha) = 1$ if $\alpha\neq 0$ and $\epsilon(\alpha)=0$ otherwise. 

The Hilbert polynomial of a coherent sheaf $E$, with respect to $H$, is defined as $P_E(k) = \chi(E(k))$, where $E(k) = E\otimes \OO_Y(kH)$. Fix a polynomial $\delta \in \BQ[k]$ with positive leading coefficient. The \emph{framed Hilbert polynomial} of a framed module $(E,\alpha)$, depending on the pair $(H,\delta)$, is defined as
\begin{equation}\label{eqn:framed_poly}
P_{(E,\alpha)} = P_E - \epsilon(\alpha)\delta.
\end{equation}
If $j\colon E'\into E$ is an $\OO_Y$-submodule, there is an induced framing $\alpha' = \alpha\circ j\colon E' \to G$. Note that
\[
\epsilon(\alpha') =
\begin{cases}
1 & \textrm{if }E' \nsubseteq \ker \alpha \\
0 & \textrm{if }E' \subseteq \ker \alpha.
\end{cases}
\]
\begin{definition}[{\cite[Def.~1.1]{framed_modules}}]\label{def:delta_stability}
A framed module $(E,\alpha)$ of rank $r$ is $\delta$-\emph{semistable} if for every submodule $E' \into E$ of rank $r'$, with induced framing $\alpha'$, one has $rP_{(E',\alpha')}\leq r'P_{(E,\alpha)}$. We say that $(E,\alpha)$ is $\delta$-\emph{stable} if the same holds with `$<$' replacing `$\leq$'.
\end{definition}

Huybrechts and Lehn defined moduli functors
\[
\CM_\delta^{\st}(Y;G,P)\,\subseteq\,\CM_\delta^{\ss}(Y;G,P)
\]
parametrising isomorphism classes of flat families of $\delta$-(semi)stable framed modules with framing datum $G$ and framed Hilbert polynomial $P \in \BQ[k]$.

As proved in \cite[Lemma 1.7]{framed_modules}, if $\deg \delta \geq m = \dim Y$ then in every semistable framed module $(E,\alpha)$ the framing $\alpha$ either vanishes or is injective, thus the study of $\delta$-semistable framed modules reduces to Grothendieck's theory of the Quot scheme. Thus one focuses on the case $\deg \delta = m-1$, writing
\begin{equation}
  \label{eqn:delta_polynomial}
\delta(k) = \delta_1\frac{k^{m-1}}{(m-1)!} + \delta_2\frac{k^{m-2}}{(m-2)!} + \cdots + \delta_m,\quad \delta_1>0.
\end{equation}
Huybrechts and Lehn defined the $(H,\delta)$-slope of a framed module $(E,\alpha)$ with positive rank as the ratio
\begin{equation}\label{eqn_def_slope}
\mu_{(H,\delta)}(E,\alpha) = \frac{c_1(E) \cdot H^{m-1}-\epsilon(\alpha)\delta_1}{\rk E}.
\end{equation}

%The following is a stronger notion of stability than the one in Definition \ref{def:delta_stability}. 

\begin{definition}[{\cite[Def.~1.8]{framed_modules}}]\label{def::mu_stab}
A framed module $(E,\alpha)$ of positive rank $r = \rk E$ is said to be $\mu$-semistable with respect to $\delta_1$ if $\ker \alpha$ is torsion free and for every submodule $E' \into E$, with $0 < \rk E' < r$, one has $\mu_{(H,\delta)}(E',\alpha')\leq \mu_{(H,\delta)}(E,\alpha)$. Stability is defined replacing `$\leq$' with `$<$'.
\end{definition}

%\begin{remark}\label{rem::rk_1}
%It follows directly from definition \ref{def::mu_stab} that a rank $1$ framed module $(E,\alpha)$ with $E$ torsion free is $\mu_{(H,\delta)}$-stable for any choice of $(H,\delta)$.
%\end{remark}

For framed modules of positive rank, such as those studied in this paper, one has that $\mu$-stability with respect to $\delta_1$ implies $\delta$-stability. Also note that a rank $1$ framed module $(E,\alpha)$ with $E$ torsion free is $\mu$-stable for any choice of $(H,\delta)$. 

The notion which behaves best in the sense of moduli is $\delta$-stability. We now recall the part of the main theorem of \cite{framed_modules} which is relevant for our paper.

\begin{theorem}[{\cite[Thm.~0.1]{framed_modules}}]
\label{thm:HL_representability}
Let $\delta \in \BQ[k]$ be as in \eqref{eqn:delta_polynomial}. Fix $G \in \Coh Y$ and $P \in \BQ[k]$. There exists a quasiprojective fine moduli scheme $M_{\delta}^{\st}(Y;G,P)$ representing the functor $\CM_\delta^{\st}(Y;G,P)$ of isomorphism classes of $\delta$-stable framed modules with framing datum $G$ and framed Hilbert polynomial $P$.
\end{theorem}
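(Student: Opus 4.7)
The plan is to follow the standard GIT strategy used in the theory of moduli of sheaves (cf.~the work of Simpson and Maruyama), adapted to take the framing datum into account. The first step is \emph{boundedness}: one shows that the family of $\delta$-semistable framed modules $(E,\alpha)$ with framing datum $G$ and framed Hilbert polynomial $P$ is bounded. After splitting off the two cases $\alpha=0$ and $\alpha\neq 0$, the Hilbert polynomial $P_{E}$ of $E$ is determined by $P$ and $\delta$, and the $\delta$-semistability inequality controls the Hilbert polynomials of subsheaves of $E$ by a correction coming from $\delta$. This yields a uniform bound on the Harder--Narasimhan filtration of $E$, hence boundedness of the family by Grothendieck's criterion.

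Given boundedness, one picks $N\gg 0$ such that for every $\delta$-semistable $(E,\alpha)$ in the family, the twist $E(N)$ is globally generated with vanishing higher cohomology and $h^{0}(E(N))=h$ is a fixed integer. Setting $V=\bfk^{h}$, the choice of an isomorphism $V\simto H^{0}(E(N))$ presents $E$ as a quotient $q\colon V\otimes\OO_{Y}(-N)\onto E$ and $\alpha$ as an $\OO_{Y}$-linear map $a\colon V\otimes\OO_{Y}(-N)\to G$ factoring through $q$. This embeds the family into a locally closed subscheme $\CZ$ of the product
\[
\Quot_{Y}\bigl(V\otimes\OO_{Y}(-N),P_{E}\bigr)\times\Hom\bigl(V\otimes\OO_{Y}(-N),G\bigr),
\]
cut out by the factorisation condition. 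The group $\GL(V)$ acts naturally on $\CZ$, absorbing the choice of isomorphism $V\simto H^{0}(E(N))$, and isomorphism classes of $\delta$-stable framed modules with the prescribed numerical data correspond bijectively to $\GL(V)$-orbits of points of $\CZ$ with trivial stabiliser.

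The heart of the proof, and the main obstacle, is the GIT matching. One introduces a two-parameter family of $\GL(V)$-linearised ample line bundles $\CL_{n_{1},n_{2}}$ on a projective completion of $\CZ$, built from the tautological polarisations of the two factors. A Hilbert--Mumford analysis of one-parameter subgroups of $\GL(V)$ attached to weighted filtrations of $V$, transported along $q$ to weighted filtrations of $E$, matches the GIT weight with the polynomial defect $rP_{(E',\alpha')}-r'P_{(E,\alpha)}$ appearing in \Cref{def:delta_stability}, provided the ratio $n_{2}/n_{1}$ is tuned in a precise way determined by $\delta$. This refines Simpson's weight estimates for the Quot scheme by the contribution of the framing factor, and is where all the combinatorics is concentrated. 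Once this matching is achieved, Mumford's theorem produces a quasiprojective geometric quotient $M_{\delta}^{\st}(Y;G,P)$ of the stable locus $\CZ^{\st}$. Since $\delta$-stable framed modules have no non-trivial automorphisms, the $\GL(V)$-stabilisers on $\CZ^{\st}$ are trivial, and the universal quotient together with the tautological framing descend to a universal $\delta$-stable framed module on $M_{\delta}^{\st}(Y;G,P)\times Y$, exhibiting this scheme as a fine moduli space for $\CM_{\delta}^{\st}(Y;G,P)$.
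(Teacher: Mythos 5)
The paper offers no proof of this statement: it is imported verbatim from Huybrechts--Lehn \cite{framed_modules}, Thm.~0.1, and used as a black box. Your outline faithfully reproduces the GIT strategy of that reference --- boundedness, the embedding into a Quot scheme times a space of framings, a $\delta$-tuned linearisation matched to $\delta$-stability via Hilbert--Mumford weights on weighted filtrations, and fineness from the fact that a $\delta$-stable framed module necessarily has $\alpha\neq 0$ and hence no non-trivial automorphisms --- so it is the same approach, with the substantial weight estimates understandably only sketched.
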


\subsection{Framed sheaves on projective spaces}
Fix a hyperplane $\iota\colon D \into \BP^m$, with $m\geq 2$, and the polarisation $H=\OO_{\BP^m}(1)$. Of course $D$ is linearly equivalent to $H$, so in particular we have $D\cdot H^{m-1} = 1$, but we distinguish them as they play different roles.

Indeed, as framing datum we fix the coherent sheaf
\[
G = \iota_\ast \OO_D^{\oplus r},
\]
for a fixed integer $r\geq 1$.
Note that the framings $\alpha \in \Hom(E,G)$ naturally correspond to morphisms $\phi_\alpha\colon E|_D \to \OO_D^{\oplus r}$ via the adjunction $\iota^\ast \dashv \iota_\ast$.

Fix an integer $n \geq 0$. Consider the Chern character
\[
v_{r,n} = (r,0,\ldots,0,-n)\,\in\,\HH^{\ast}(\BP^m,\BZ).
\]

\begin{definition}\label{def:framed_sheaf_P^m}
Let $m\geq 2$ be an integer. A $D$-\emph{framed sheaf} of rank $r$ on $\BP^m$ is a framed module $(E,\alpha)$ on $\BP^m$ with framing datum $G=\iota_\ast\OO_D^{\oplus r}$, such that $E$ is torsion free with Chern character $\ch(E) = v_{r,n}$ for some $n\geq 0$, and the morphism $\phi_\alpha\colon E|_D \to \OO_D^{\oplus r}$ induced by the framing $\alpha$ is an isomorphism.
\end{definition}

Note that, for a $D$-framed sheaf $(E,\alpha)$, the torsion free sheaf $E$ is locally free in a neighborhood of $D$, and the canonical map $E \into E^{\vee\vee}$ is an isomorphism in a neighborhood of $D$.

We will make crucial use of the following result due to Abe and Yoshinaga.

\begin{theorem}[{\cite[Thm. 0.2]{Reflexive_Split}}]\label{thm:abe_yoshi}
Let $F$ be a reflexive sheaf of positive rank on $\mathbb P^{m}$, where $m\geq 3$. Then $F$ splits into a direct sum of line bundles if and only if there exists a hyperplane $D \subset \BP^m$ such that $F|_{D}$ splits into a direct sum of line bundles.
\end{theorem}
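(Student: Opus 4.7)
The forward direction is immediate: if $F \cong \bigoplus_j \OO_{\BP^m}(a_j)$, then restriction to any hyperplane $D$ yields $F|_D \cong \bigoplus_j \OO_D(a_j)$, which splits. The serious content is the converse. My plan is to reduce the statement to a cohomological splitting criterion of Horrocks type: for a vector bundle $E$ on $\BP^m$ with $m\geq 2$, one has $E \cong \bigoplus_j \OO_{\BP^m}(a_j)$ if and only if $\HH^i(\BP^m, E(k)) = 0$ for all $k \in \BZ$ and all $1 \leq i \leq m-1$. Since $F$ is reflexive, its non--locally-free locus has codimension at least three in $\BP^m$; I would first argue that the splitting hypothesis on $F|_D$ propagates local freeness to a neighbourhood of $D$, so that the restriction sequence below has no torsion kernel.

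With $F$ locally free near $D$, I would exploit the short exact sequence
\[
0 \to F(-1) \to F \to F|_D \to 0
\]
and its twists by $\OO_{\BP^m}(k)$. Since $F|_D$ splits as a direct sum of line bundles on $D \cong \BP^{m-1}$, Bott vanishing gives $\HH^i(D, F|_D(k)) = 0$ for all $k \in \BZ$ and $1 \leq i \leq m-2$. The associated long exact sequence then shows that multiplication by the defining section of $D$ induces isomorphisms $\HH^i(\BP^m, F(k-1)) \simto \HH^i(\BP^m, F(k))$ in the range $1 \leq i \leq m-2$. Combining these isomorphisms with Serre vanishing for $k \gg 0$ forces $\HH^i(\BP^m, F(k)) = 0$ for all $k$ and all $1 \leq i \leq m-2$. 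To close the gap at $i = m-1$, I would run the same argument for the reflexive sheaf $F^\vee$ (whose restriction to $D$ splits as well) and transfer the resulting vanishing back via Serre duality. Applying the splitting criterion to a sufficiently high twist of $F$ then yields the desired decomposition.

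The main obstacle is twofold. First, the splitting criterion used above is cleanest for locally free sheaves, so one must either promote $F$ to be globally locally free (using that the hypotheses force the singular locus of $F$ to be disjoint from $D$, together with the codimension bound for reflexive sheaves) or extend Horrocks' criterion directly to reflexive sheaves; the condition $m \geq 3$ is precisely what makes the codimension estimate available. Second, controlling the top piece $\HH^{m-1}$ is genuinely more delicate than the intermediate pieces, and requires the dualising argument on $F^\vee$ to work unconditionally, which in turn uses that $F^\vee$ inherits the splitting hypothesis on $D$. Once these two points are handled, the rest of the argument is a mechanical unwinding of the restriction sequence.
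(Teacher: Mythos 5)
First, a point of order: the paper does not prove this statement at all --- it is quoted verbatim from Abe--Yoshinaga \cite{Reflexive_Split} and used as a black box in \cref{cor:embedding_in_double_dual} --- so there is no internal proof to compare yours against, and your Horrocks-type sketch must stand on its own. As written it has a genuine gap at $i=1$. From the restriction sequence $0 \to F(k-1) \to F(k) \to F|_D(k) \to 0$ and the vanishing of the intermediate cohomology of $\bigoplus_j \OO_{\BP^{m-1}}(a_j)$, the map $\HH^i(\BP^m,F(k-1)) \to \HH^i(\BP^m,F(k))$ is an isomorphism only in the range $2 \le i \le m-2$: for $i=1$ the relevant group $\HH^{0}(D,F|_D(k))$ does \emph{not} vanish, so the map is merely surjective. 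Surjectivity propagates vanishing toward larger $k$, so Serre vanishing at $k\gg 0$ tells you nothing about $\HH^1(F(k))$ for small $k$ --- and killing $\HH^1_*(F)$ is exactly the hard content of any Horrocks argument. You have also located the delicacy at the wrong end: at $i=m-1$ the connecting map is injective because $\HH^{m-2}(D,F|_D(k))=0$ once $m\ge 3$, so downward induction from Serre vanishing handles the top piece with no duality needed. The $i=1$ case can be rescued by running the easy argument on $F^\vee$ to kill $\HH^{m-1}_*(F^\vee)$ and transferring back, but since $F$ need not be locally free this transfer requires Serre duality in the form $\HH^{m-1}(F^\vee(k))^\vee \cong \Ext^1(F^\vee,\OO(-m-1-k))$ together with the local-to-global Ext spectral sequence (using $\lExt^{m-1}(F^\vee,\OO)=0$ for reflexive sheaves); your sketch does not engage with this.

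Second, the reduction to the locally free case does not go through as stated. The hypothesis that $F|_D$ is locally free of rank $r$ does force, by upper semicontinuity of fibre dimension, the non-locally-free locus of $F$ to be a closed subset of $\BP^m$ disjoint from $D$, hence a finite set of points of $\BA^m$; but it need not be empty, so you cannot ``promote $F$ to be globally locally free''. You therefore genuinely need a splitting criterion valid for reflexive sheaves. Such a criterion does hold --- reflexivity gives $\depth F_x \ge 2$ at every point, so the graded module $\bigoplus_k\HH^0(F(k))$ is saturated and the vanishing of $\HH^i_*(F)$ for $1\le i\le m-1$ forces it to be free --- but this is precisely the step you defer to one of your two ``obstacles'', and it is not a formality. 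Until both points are filled in, the proposal is a plausible strategy rather than a proof.
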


\begin{corollary}\label{cor:embedding_in_double_dual}
Let $(E,\alpha)$ be a $D$-framed sheaf of rank $r$ on $\mathbb P^{m}$, with $m\geq 3$, such that $\ch(E) = v_{r,n}$. Then there is a natural short exact sequence of sheaves
\begin{equation}\label{eqn::exact_seq_framed}
    0 \to E \to \OO_{\mathbb P^{m}}^{\oplus r} \to Q \to 0
\end{equation}
where $Q$ has finite support contained in $\mathbb A^{m}=\mathbb P^{m}\setminus D$.
\end{corollary}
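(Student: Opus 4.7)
The plan is to identify the double dual $E^{\vee\vee}$ canonically with $\OO_{\BP^m}^{\oplus r}$ via an isomorphism extending the framing $\phi_\alpha$, and to take the canonical inclusion $E\into E^{\vee\vee}$ as the first arrow of \eqref{eqn::exact_seq_framed}.

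First I would record three standard facts about $E$: the double dual $E^{\vee\vee}$ is reflexive of rank $r$; the canonical map $E\to E^{\vee\vee}$ is injective because $E$ is torsion free; and it is an isomorphism in a neighbourhood of $D$, as noted right after \Cref{def:framed_sheaf_P^m}. In particular,
\[
E^{\vee\vee}|_D \,\simeq\, E|_D \,\xrightarrow{\,\phi_\alpha\,}\, \OO_D^{\oplus r}
\]
exhibits $E^{\vee\vee}|_D$ as a direct sum of trivial line bundles on $D\simeq \BP^{m-1}$. Since $m\geq 3$, \Cref{thm:abe_yoshi} applies and yields a splitting $E^{\vee\vee}\simeq \bigoplus_{i=1}^r \OO_{\BP^m}(a_i)$. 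Restricting back to $D$ and comparing with the trivial splitting forces $a_i=0$ for every $i$ (since $\dim D\geq 2$, a line bundle $\OO_D(a)$ is trivial iff $a=0$), so $E^{\vee\vee}$ is (abstractly) isomorphic to $\OO_{\BP^m}^{\oplus r}$.

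To produce a \emph{canonical} isomorphism compatible with the framing, I would apply $\Hom(E^{\vee\vee},-)$ to the tautological sequence $0\to \OO_{\BP^m}(-1)^{\oplus r}\to \OO_{\BP^m}^{\oplus r}\to \iota_\ast\OO_D^{\oplus r}\to 0$. The vanishings $H^0(\OO_{\BP^m}(-1))=H^1(\OO_{\BP^m}(-1))=0$ (valid for $m\geq 2$), combined with $E^{\vee\vee}\simeq \OO^{\oplus r}$ and the adjunction $\iota^\ast\dashv \iota_\ast$, yield an isomorphism
\[
\Hom(E^{\vee\vee},\OO_{\BP^m}^{\oplus r}) \,\xrightarrow{\,\sim\,}\, \Hom(E^{\vee\vee}|_D,\OO_D^{\oplus r}).
\]
The map $\phi_\alpha\circ (E^{\vee\vee}|_D\simto E|_D)^{-1}$ on the right therefore lifts uniquely to some $\Phi\colon E^{\vee\vee}\to \OO_{\BP^m}^{\oplus r}$, which is forced to be an isomorphism: as a map between two trivial vector bundles on $\BP^m$ it corresponds to a constant $r\times r$ matrix, whose determinant is a nonzero scalar since $\Phi|_D$ is invertible.

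The sequence \eqref{eqn::exact_seq_framed} is then defined as $0\to E\into E^{\vee\vee}\xrightarrow{\Phi} \OO_{\BP^m}^{\oplus r}\to Q\to 0$, with $Q$ the cokernel. By construction $Q$ is supported on the non-locally-free locus of $E$, which lies in $\BP^m\setminus D=\BA^m$ by the third observation above. The Chern character computation $\ch(Q)=\ch(\OO^{\oplus r})-v_{r,n}=(0,\ldots,0,n)$ finally shows that $Q$ is $0$-dimensional of length $n$. The essential input --- and the only place where the hypothesis $m\geq 3$ is used --- is the splitting criterion of Abe--Yoshinaga; everything else is a cohomological lifting argument plus Chern character bookkeeping.
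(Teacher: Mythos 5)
Your proof is correct and follows essentially the same route as the paper's: apply the Abe--Yoshinaga splitting criterion to the reflexive hull $E^{\vee\vee}$, observe that the summands must be trivial, and take $Q$ to be the cokernel of $E\into E^{\vee\vee}\cong \OO_{\BP^m}^{\oplus r}$. The only difference is that you spell out two points the paper leaves implicit --- the canonicity of the trivialisation of $E^{\vee\vee}$ (via the $H^0/H^1$ lifting argument) and the zero-dimensionality of $Q$ (via the Chern character) --- and both of these elaborations are correct.
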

\begin{proof}
Since $E$ is torsion free, the natural map $E \to E^{\vee\vee}$ to its double dual is injective. Moreover, $E^{\vee\vee}$ is reflexive and $\alpha$ induces a canonical isomorphism $E^{\vee\vee}|_{D}\cong \OO_{D}^{\oplus r}$. By Theorem \ref{thm:abe_yoshi} we have that $E^{\vee\vee}$ splits as a direct sum of line bundles, and it is immediate to see that these line bundles are necessarily trivial. %this is a standard argument, see e.g.~the argument in \cite[Thm.~2.3.1]{OSS}. 
This yields an isomorphism $E^{\vee\vee}\cong \OO_{\mathbb P^{m}}^{\oplus r}$, and since $E|_{D}\cong \OO_{D}^{\oplus r}$ it follows that the quotient $Q = \OO_{\mathbb P^{m}}^{\oplus r}/E$ is supported on finitely many points lying in $\mathbb P^{m}\setminus D$.
\end{proof}

In the case of projective surfaces it has been proved by Bruzzo and Markushevich that $\mu_{(H,\delta)}$-stability is automatically implied when considering a ``good framing'' \cite[Thm.~3.1]{framed_sheaves_surfaces}. The strategy of the proof does not extend in full generality to higher dimensional varieties, as observed by Oprea \cite{oprea2013:framed_threefolds}. We shall now provide a new argument for the particular case at hand, but it is still an open question whether it is possible to extend the result to more general settings. 
\begin{lemma}\label{lemma:framed_stable}
Fix integers $m\geq 3$, and $r\geq 1$. Let $(E,\alpha)$ be a $D$-framed sheaf of rank $r$ on $\mathbb P^{m}$, and consider a polynomial $\delta$ as in \eqref{eqn:delta_polynomial}, such that $0 < \delta_1 < r$. Then $(E,\alpha)$ is $\mu$-stable with respect to $\delta_1$, thus in particular it is $\delta$-stable. 
\end{lemma}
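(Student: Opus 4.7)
The plan is to compute the slope $\mu_{(H,\delta)}(E,\alpha)$ directly from $c_1(E)=0$, and then bound from above the slope of any proper subsheaf by exploiting the embedding $E\into \OO_{\BP^m}^{\oplus r}$ provided by \Cref{cor:embedding_in_double_dual}. Since $c_1(E)=0$ and $\alpha\neq 0$, one immediately gets $\mu_{(H,\delta)}(E,\alpha) = -\delta_1/r$. The torsion-freeness of $\ker\alpha$ required by \Cref{def::mu_stab} is automatic, as $\ker\alpha\subset E$ is a subsheaf of a torsion free sheaf. The key input for bounding subsheaves is the $\mu$-semistability of $\OO_{\BP^m}^{\oplus r}$ (slope $0$): for any subsheaf $F\into \OO_{\BP^m}^{\oplus r}$ one has $c_1(F)\cdot H^{m-1}\leq 0$.

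A useful preliminary identification I would establish is $\ker\alpha \cong E(-D)$. Indeed, since $\phi_\alpha$ is an isomorphism, the map $\alpha$ factors as $E\onto \iota_\ast\iota^\ast E \simto \iota_\ast\OO_D^{\oplus r}$. Tensoring the standard sequence $0\to \OO(-D)\to \OO\to\iota_\ast\OO_D\to 0$ by $E$, and using that $E(-D)$ is torsion free (hence $\mathrm{Tor}_1(E,\iota_\ast\OO_D)=0$), yields an exact sequence $0\to E(-D)\to E\to \iota_\ast\iota^\ast E\to 0$, which identifies $\ker\alpha$ with $E(-D)$.

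I would then carry out the slope comparison by splitting on whether the induced framing $\alpha'$ on a given subsheaf $E'\into E$ of rank $0<r'<r$ vanishes. If $\alpha'\neq 0$, then $\epsilon(\alpha')=1$, and $c_1(E')\cdot H^{m-1}\leq 0$ gives
\[
\mu_{(H,\delta)}(E',\alpha') = \frac{c_1(E')\cdot H^{m-1}-\delta_1}{r'} \leq -\frac{\delta_1}{r'} < -\frac{\delta_1}{r},
\]
the strict inequality using $r'<r$ and $\delta_1>0$. If $\alpha'=0$, i.e.~$E'\subseteq \ker\alpha = E(-D)$, then tensoring by $\OO(D)$ gives $E'(D)\into E\into \OO_{\BP^m}^{\oplus r}$, and semistability of $\OO_{\BP^m}^{\oplus r}$ yields $c_1(E')\cdot H^{m-1}+r' = c_1(E'(D))\cdot H^{m-1}\leq 0$, so $\mu_{(H,\delta)}(E',\alpha')\leq -1$. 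The hypothesis $\delta_1<r$ is exactly what delivers $-1 < -\delta_1/r$ in this case.

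The only delicate point is the vanishing-framing case: the naive semistability bound $c_1(E')\cdot H^{m-1}\leq 0$ would only yield $\mu_{(H,\delta)}(E',\alpha')\leq 0$, which is not enough to beat $-\delta_1/r$. The twist-by-$\OO(D)$ trick, enabled by the explicit description $\ker\alpha = E(-D)$, is what upgrades the bound and makes the assumption $\delta_1<r$ sharp. The final assertion that $\delta$-stability follows from $\mu$-stability is the implication recalled in the paragraph preceding \Cref{thm:HL_representability}.
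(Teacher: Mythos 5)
Your proposal is correct and follows essentially the same route as the paper: compute $\mu_{(H,\delta)}(E,\alpha)=-\delta_1/r$, identify $\ker\alpha=E(-D)$, embed $E'$ into $\OO_{\BP^m}^{\oplus r}$ via \Cref{cor:embedding_in_double_dual}, and split into the cases $\epsilon(\alpha')=1$ (where $\mu_H(E')\leq 0$ and $r'<r$ suffice) and $E'\subseteq E(-D)$ (where the twist by $\OO(D)$ yields $\mu_H(E')\leq -1$, beaten by $-\delta_1/r$ thanks to $\delta_1<r$). The only cosmetic difference is how the identification $\ker\alpha=E(-D)$ is justified (Tor-vanishing versus the paper's diagram), which is immaterial.
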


\begin{proof}
First of all, since $c_1(E) = 0$ and $\epsilon(\alpha)=1$, the $(H,\delta)$-slope of $(E,\alpha)$ defined in Equation \eqref{eqn_def_slope} is
\begin{equation}\label{eqn:H-delta-slope}
\mu_{(H,\delta)}(E,\alpha) = - \frac{\delta_1}{r}.
\end{equation}
Clearly $\ker \alpha \into E$ is torsion free because $E$ is torsion free by definition. Moreover, by means of the diagram
\[
\begin{tikzcd}
E(-D)\arrow[hook]{r} & E \arrow[two heads]{r} & \iota_\ast \iota^\ast E\isoarrow{d} \\
& E \arrow[equal]{u}\arrow{r}{\alpha} & \iota_\ast \OO_{D}^{\oplus r}
\end{tikzcd}
\]
we deduce that $\ker \alpha = E(-D)$.

If $r=1$ there is nothing left to prove, so we can assume $r>1$. Fix a submodule $E' \into E$ of rank $r'$, where $0<r'<r$. By Corollary \ref{cor:embedding_in_double_dual}, we have an inclusion $E' \into E \into \OO_{\BP^m}^{\oplus r}$. Since $\OO_{\BP^m}^{\oplus r}$ is $\mu_H$-semistable of slope $0$, we have $\mu_H(E') \leq 0$. We now have to distinguish two cases:

\begin{enumerate}
    \item $E'\nsubseteq \ker \alpha$. This means $\epsilon(\alpha') = 1$, where $\alpha' \colon E' \into E \to \iota_\ast \OO_D^{\oplus r}$ is the induced framing on $E'$. We have the sought after inequality
    \[
    \mu_{(H,\delta)}(E',\alpha') = \frac{c_1(E')\cdot H^{m-1}-\delta_1}{r'} = \mu_H(E')-\frac{\delta_1}{r'} < -\frac{\delta_1}{r}
    \]
    if and only if $\mu_H(E') < \delta_1(1/r'-1/r)$. But since $\delta_1>0$ and $r'<r$ we have $\delta_1(1/r'-1/r)>0$. Since $E'$ embeds in the $\mu_H$-semistable module $\OO_{\BP^m}^{\oplus r}$, necessarily $\mu_H(E') \leq 0 < \delta_1(1/r'-1/r)$, as claimed.
    \item $E' \subseteq \ker \alpha=E(-D)$. This means $\epsilon(\alpha')=0$. We compute the ordinary $H$-slope
    \[
    \mu_H(E'(D)) = \frac{c_1(E'(D))\cdot H^{m-1}}{r'} = \frac{(r'D + c_1(E'))\cdot H^{m-1}}{r'} = 1+\mu_H(E') \leq 0
    \]
where the inequality is induced by the inclusion $E'(D) \into E \into \OO_{\BP^m}^{\oplus r}$. So we obtain
\[
\mu_{(H,\delta)}(E',\alpha') = \frac{c_1(E')\cdot H^{m-1}}{r'} = \mu_H(E')\leq -1 < - \frac{\delta_1}{r} = \mu_{(H,\delta)}(E,\alpha),
\]
by our assumption $\delta_1<r$ and Equation \eqref{eqn:H-delta-slope}.
\end{enumerate}
The proof is complete.
\end{proof}

%%%%%%%%%%%%%%%%%%%%%%%%%%%%%%%%%%%%%%%%%%%%%%%%%%%
\subsection{The moduli functor of framed sheaves}
Fix integers $m\geq 2$, $r\geq 1$, and $n\geq 0$. Also fix a hyperplane $\iota\colon D \into \BP^m$.
Consider the moduli functor of $D$-framed sheaves of rank $r$ on $\BP^m$ with Chern character $v_{r,n} = (r,0,\ldots,0,-n)$, i.e.~the functor $\Fram_{r,n}(\BP^m)\colon \Sch_{\bfk}^{\op} \to \Sets$ sending
\[
B \mapsto \Set{
  (\mathscr E,\Phi)\,|
     \begin{array}{c}
       \mathscr E \in \Coh(\BP^m\times_{\bfk} B)\textrm{ is a }B\textrm{-flat family of torsion free sheaves} \\
       \textrm{with }\ch(\mathscr E_b) = v_{r,n}\textrm{ for all }b\in B, \textrm{ and }\Phi\colon \mathscr E|_{D\times_{\bfk} B} \simto \OO^{\oplus r}_{D\times_{\bfk} B}
     \end{array}
     } \Bigg{/}\sim
\]
where $(\mathscr E,\Phi) \sim(\mathscr F,\Psi)$ if and only if there is an isomorphism $\theta\colon \mathscr E \simto \mathscr F$ such that $\Psi \circ \theta|_{D\times_{\bfk} B} = \Phi$. We have defined the functor using the map $\mathscr E|_{D\times_{\bfk} B} \to \OO_{D\times_{\bfk} B}^{\oplus r}$, but we could have used $\mathscr E \to (\iota\times \id_B)_\ast \OO_{D\times_{\bfk} B}^{\oplus r}$ instead.

Let $\delta$ be a rational polynomial as in \eqref{eqn:delta_polynomial}. If $(E,\alpha)$ is a $D$-framed sheaf with $\ch(E)=v_{r,n}$ then, since $\epsilon(\alpha)=1$, according to Equation \eqref{eqn:framed_poly} we have
\[
P_{(E,\alpha)}(k) = P_{r,n}(k)-\delta(k)\in \BQ[k],
\]
where $P_{r,n}(k) = \chi(E(k))$ is the Hilbert polynomial of a coherent sheaf $E$ with Chern character $v_{r,n}$.

\begin{prop}\label{prop:open_subfunctor}
Fix integers $m\geq 2$, $r\geq 1$, and $n\geq 0$. Let $\delta$ be a polynomial as in \eqref{eqn:delta_polynomial}, with $0<\delta_1<r$. Set $G=\iota_\ast \OO_D^{\oplus r}$ and $P=P_{r,n}-\delta$. Then
the moduli functor $\Fram_{r,n}(\BP^m)$ is represented by an open subscheme $\mathrm{Fr}_{r,n}(\BP^m) \subset M^{\st}_\delta(\BP^m;G,P)$.
\end{prop}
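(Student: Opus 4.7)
The plan is to realise $\Fram_{r,n}(\BP^m)$ as an open subfunctor of the representable functor $\CM^{\ss}_\delta(\BP^m;G,P)\supset \CM^{\st}_\delta(\BP^m;G,P)$ from \Cref{thm:HL_representability}, and thereby extract an open subscheme of $M^{\st}_\delta(\BP^m;G,P)$ representing it.

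First I would build a natural transformation $\tau\colon \Fram_{r,n}(\BP^m) \to \CM^{\st}_\delta(\BP^m;G,P)$. To a family $(\mathscr E,\Phi)\in \Fram_{r,n}(\BP^m)(B)$ one associates, via the adjunction $\iota^*\dashv\iota_*$ (applied relatively over $B$), a framing
\[
\alpha\colon \mathscr E \to (\iota\times \id_B)_\ast\OO_{D\times_\bfk B}^{\oplus r} = G_B.
\]
Since $\Phi_b$ is an isomorphism on every fiber, $\epsilon(\alpha_b)=1$ for all $b$, so the framed Hilbert polynomial matches $P=P_{r,n}-\delta$. Fiberwise $\delta$-stability is provided by \Cref{lemma:framed_stable} when $m\geq 3$, and by the classical surface theory (cf.~\cite{framed_sheaves_surfaces}) when $m=2$. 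Injectivity of $\tau_B$ is immediate, since $\Phi$ is recovered from $\alpha$ by adjunction.

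Next I would show $\tau$ is an open subfunctor by testing against an arbitrary scheme $B$ carrying a morphism to $M^{\st}_\delta(\BP^m;G,P)$, classifying a family $(\mathscr E,\alpha)$. Two conditions must cut out an open subscheme of $B$. The first, $\ch(\mathscr E_b)=v_{r,n}$, is clopen on $B$ by local constancy of Chern classes in a flat family of torsion free sheaves. On that clopen locus $B_0$, the second condition -- that the adjoint $\phi_\alpha\colon \mathscr E|_{D\times B_0}\to \OO_{D\times B_0}^{\oplus r}$ be an isomorphism -- is handled as follows. The cokernel $\mathscr Q=\coker(\phi_\alpha)$ on $D\times B_0$ has closed support, whose image under the proper projection $D\times B_0 \to B_0$ is closed; its complement $V\subset B_0$ is the open locus where $\phi_\alpha$ is fiberwise (hence globally, on $D\times V$) surjective. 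Restricting the resulting short exact sequence $0\to \mathscr K\to \mathscr E|_{D\times V}\to \OO_{D\times V}^{\oplus r}\to 0$, the kernel $\mathscr K$ vanishes exactly on the further open subset $U\subset V$ obtained by removing the (closed) image of $\Supp\mathscr K$ under the proper projection. On $U$, the morphism $\phi_\alpha$ is a genuine isomorphism, so the family $(\mathscr E|_{\BP^m\times U},\phi_\alpha|_{D\times U})$ lies in $\Fram_{r,n}(\BP^m)(U)$.

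The hard part will be controlling the isomorphism condition in families: fiberwise surjectivity does not upgrade to global isomorphism without extra input, and one must argue via Nakayama together with the properness of $D\times B_0 \to B_0$ that vanishing of kernel and cokernel at a single fiber propagates to a whole neighbourhood in $B$. One might worry that $\mathscr E|_{D\times V}$ is not $V$-flat, but this is not needed: the support-plus-properness argument above avoids any appeal to flatness of the restriction. With openness in hand, $U$ is the fibre product $B\times_{M^{\st}_\delta}\Fram_{r,n}(\BP^m)$, exhibiting $\Fram_{r,n}(\BP^m)$ as an open subfunctor of $\CM^{\st}_\delta(\BP^m;G,P)$. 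Applying \Cref{thm:HL_representability} then yields the required open subscheme $\mathrm{Fr}_{r,n}(\BP^m)\subset M^{\st}_\delta(\BP^m;G,P)$.
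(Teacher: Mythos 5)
Your proposal is correct and follows essentially the same route as the paper: realise $\Fram_{r,n}(\BP^m)$ as the open locus in $M^{\st}_\delta(\BP^m;G,P)$ where the underlying sheaf is torsion free and the adjoint map $\phi_\alpha$ is an isomorphism, with \Cref{lemma:framed_stable} (resp.\ the surface literature for $m=2$) guaranteeing that every $D$-framed sheaf is $\delta$-stable. The only difference is that you spell out the support-plus-properness argument for openness of the isomorphism condition, which the paper's proof simply asserts.
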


\begin{proof}
The case of $\BP^2$ is well known \cite{Nakajima, framed_sheaves_surfaces}. Hence, we can restrict to the case $m\geq 3$. The locus of framed modules $(E,\alpha) \in M^{\st}_\delta(\BP^m;G,P)$ such that $E$ is torsion free, and the map $\phi_\alpha\colon E|_D \to \OO_D^{\oplus r}$ induced by the framing $\alpha$ is an isomorphism, is open. But by Lemma \ref{lemma:framed_stable}, all $D$-framed sheaves are $\delta$-stable.
\end{proof}

\section{Moduli of framed sheaves and Quot schemes}\label{sec::isomorphism}

In this section we review the notion of tangent-obstruction theory on a deformation functor \cite{fga_explained}, and we compare the tangent-obstruction theory on the local Quot functor with that on the $D$-framed sheaves local moduli functor. This leads to the proof of Theorem \ref{main_thm}.

%%%%%%%%%%%%%%%%%%%%%%%%%%%%%%%%%%%%%%%%%%%%%%%%%%%
\subsection{Comparing tangent-obstruction theories}\label{sec:comparison}
We refer the reader to \cite[Ch.~6]{fga_explained} for a thorough exposition on tangent-obstruction theories on deformation functors. 

Let $\Art_{\bfk}$ be the category of local artinian ${\bfk}$-algebras with residue field ${\bfk}$.\footnote{The content of Section \ref{sec:comparison} works over fields of arbitrary characteristic.} A \emph{deformation functor} is a covariant functor $\mathrm{D}\colon \Art_{\bfk}\to \Sets$ such that $\mathrm{D}(\bfk)$ is a singleton. A \emph{tangent-obstruction theory} on a deformation functor $\mathrm{D}$ is defined to be a pair $(T_1,T_2)$ of finite dimensional ${\bfk}$-vector spaces such that for any small extension $I \into B\onto A$ in $\Art_{\bfk}$ there is an `exact sequence of sets'
\begin{equation}\label{eqn:short_exact_Sequence_Sets}
T_1\otimes_{\bfk}I \to \mathrm{D}(B) \to \mathrm{D}(A) \xrightarrow{\textrm{ob}} T_2\otimes_{\bfk}I,
\end{equation}
which would be decorated with an additional `$0$' on the left whenever $A=\bfk$, and is moreover functorial in small extensions in a precise sense \cite[Def.~6.1.21]{fga_explained}. We spell out here what exactness of a short exact sequence of sets such as \eqref{eqn:short_exact_Sequence_Sets} means. Exactness at $\mathrm{D}(A)$ means that an element $\alpha \in \mathrm{D}(A)$ lifts to $\mathrm{D}(B)$ if and only if $\ob(\alpha)=0$. Exactness at $\mathrm{D}(B)$ means that, if there is a lift, then $T_1\otimes_{\bfk}I$ acts transitively on the set of lifts. If the sequence started with a `$0$', it would mean that lifts form an affine space under $T_1\otimes_{\bfk}I$.

The \emph{tangent space} of the tangent-obstruction theory is $T_1$, and is \emph{canonical}, in the sense that it is determined by the deformation functor as $T_1=\mathrm{D}(\bfk[t]/t^2)$. The \emph{obstruction space}, $T_2$, is not canonical: any larger $\bfk$-linear space $U_2\supset T_2$ yields a new tangent-obstruction theory $(T_1,U_2)$. A deformation functor $\mathrm{D}$ is \emph{pro-representable} if $\mathrm{D}\cong \Hom_{\bfk\textrm{-alg}}(R,-)$ for some local ${\bfk}$-algebra $R$ with residue field ${\bfk}$. A tangent-obstruction theory on a pro-representable deformation functor is always decorated with a `$0$' on the left in the sequences \eqref{eqn:short_exact_Sequence_Sets}, for any small extension $I\into B \onto A$.

\begin{example}
\label{example:Ti_quot}
Let $V$ be a coherent sheaf on a projective ${\bfk}$-scheme $Y$, and fix a polynomial $P$. The Quot functor
\[
\mathsf Q = \mathsf{Quot}_Y(V,P)\colon \Sch_{\bfk}^{\op} \to \Sets
\]
sends a $\bfk$-scheme $B$ to the set of isomorphism classes of surjections $\pi_Y^\ast V \onto \mathscr Q$, where $\pi_Y\colon Y\times_{\bfk} B \to Y$ is the projection and $\mathscr Q$ is a coherent sheaf on $Y\times_{\bfk}B$, flat over $B$, whose fibres $\mathscr Q_b = \mathscr Q|_{Y\times_{\bfk} \set{b}}$ have Hilbert polynomial $P$. Two surjections are `isomorphic' if they have the same kernel. The Quot functor is represented by a projective ${\bfk}$-scheme $\mathrm{Q} = \Quot_Y(V,P)$. We refer the reader to \cite[Ch.~5]{fga_explained} for a complete, modern discussion on Quot schemes.
Fix a point $x_0 \in \mathrm{Q}(\bfk)$ corresponding to a quotient $V \onto Q$ with kernel $E$. One can consider the \emph{local Quot functor} at $x_0$, namely the subfunctor $\mathsf Q_{x_0} \subset \mathsf{Q}|_{\Art_{\bfk}}\colon {\Art_{\bfk}} \to \Sets$ sending a local artinian ${\bfk}$-algebra $A$ to the set of families $x \in \mathsf{Q}(\Spec A)$ such that $x|_{\mathfrak{m}}=x_0$, where $\mathfrak{m}$ is the closed point of $\Spec A$. By representability of $\mathsf Q$, the functor $\mathsf Q_{x_0}$ is pro-representable, isomorphic to $\Hom_{\bfk\textrm{-alg}}(\OO_{\mathrm{Q},x_0},-)$. By \cite[Thm.~6.4.9]{fga_explained}, the pair of $\bfk$-vector spaces
\begin{equation}\label{eqn:Ti_quot}
T_1 = \Hom(E,Q),\quad T_2 = \Ext^1(E,Q)
\end{equation}
form a tangent-obstruction theory on the deformation functor $\mathsf Q_{x_0}$.
\end{example}

The proof of the following result is included for the sake of completeness (and for lack of a suitable reference).

\begin{prop}\label{Prop:Def_Functors33}
Let $\mathrm{D}$ and $\mathrm{D}'$ be two pro-representable deformation functors carrying tangent-obstruction theories $(T_1,T_2)$ and $(T'_1,T'_2)$, respectively. Let $\eta\colon \mathrm{D}\to \mathrm{D}'$ be a natural transformation inducing a $\bfk$-linear isomorphism $\dd\colon T_1 \,\widetilde{\to}\,T_1'$ and a $\bfk$-linear embedding $T_2\into T_2'$. Then $\eta$ is a natural equivalence.
\end{prop}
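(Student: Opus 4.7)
My plan is to prove that $\eta_A\colon \mathrm{D}(A)\to \mathrm{D}'(A)$ is bijective for every $A\in \Art_{\bfk}$ by induction on the length of $A$. Since $\eta$ is a natural transformation of pro-representable functors, this will imply by Yoneda that the corresponding map $R' \to R$ of pro-representing rings is an isomorphism, so $\eta$ is a natural equivalence. The base case $A=\bfk$ is trivial because $\mathrm{D}(\bfk)=\mathrm{D}'(\bfk)$ is a singleton.

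For the inductive step I would fix a small extension $I\into B\onto A$ in $\Art_{\bfk}$ and contemplate the commutative diagram with exact rows
\begin{equation*}
\begin{tikzcd}[column sep=small]
0 \arrow[r] & T_1\otimes I \arrow[r] \arrow[d,"\dd\otimes \id","\sim"'] & \mathrm{D}(B) \arrow[r] \arrow[d,"\eta_B"] & \mathrm{D}(A) \arrow[r,"\ob"] \arrow[d,"\eta_A","\sim"'] & T_2\otimes I \arrow[d,hook] \\
0 \arrow[r] & T_1'\otimes I \arrow[r] & \mathrm{D}'(B) \arrow[r] & \mathrm{D}'(A) \arrow[r,"\ob'"] & T_2'\otimes I.
\end{tikzcd}
\end{equation*}
Both rows start with a zero because pro-representability forces the action of $T_1\otimes I$ on the set of lifts to be free and transitive. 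For surjectivity of $\eta_B$, given $y\in \mathrm{D}'(B)$ with image $y|_A\in \mathrm{D}'(A)$, by the inductive hypothesis I can write $y|_A = \eta_A(x_A)$ for a unique $x_A\in \mathrm{D}(A)$. The functoriality of tangent-obstruction theories under $\eta$ gives $\ob'(\eta_A(x_A))=(\mathrm{embedding})(\ob(x_A))$; since $y$ is a lift of $y|_A$, the right-hand side vanishes, and injectivity of $T_2\into T_2'$ forces $\ob(x_A)=0$. Hence $x_A$ lifts to some $x_B\in \mathrm{D}(B)$, and $\eta_B(x_B)$ lies in the same fibre as $y$ over $\mathrm{D}'(A)$. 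By transitivity of the $T_1'\otimes I$-action on this fibre, there exists $v'\in T_1'\otimes I$ with $v'\cdot \eta_B(x_B)=y$; using that $\dd$ is an isomorphism, I pull $v'$ back to some $v\in T_1\otimes I$ and set $x_B'=v\cdot x_B\in \mathrm{D}(B)$, which satisfies $\eta_B(x_B')=y$. For injectivity, if $\eta_B(x_B)=\eta_B(x_B')$ then their images agree in $\mathrm{D}(A)$ by the inductive hypothesis, so $x_B'=v\cdot x_B$ for some $v\in T_1\otimes I$; applying $\eta_B$ gives $\dd(v)\cdot \eta_B(x_B)=\eta_B(x_B)$, and freeness of the $T_1'\otimes I$-action combined with injectivity (in fact bijectivity) of $\dd$ yields $v=0$.

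The only delicate point is the vertical compatibility involving the obstruction maps: one must know that $\eta$, viewed as a morphism of deformation functors, induces a $\bfk$-linear map on obstruction spaces that makes the rightmost square commute. This is where the statement implicitly invokes the concept of a morphism of tangent-obstruction theories from \cite[Ch.~6]{fga_explained}; once this functoriality is granted, the inductive step above goes through and the proposition follows. In our intended application to \Cref{main_thm}, the identifications $T_1=\Hom(E,Q)$, $T_2=\Ext^1(E,Q)$ from \Cref{example:Ti_quot} and their analogues on the framed side will make this compatibility immediate from the construction of $\eta$.
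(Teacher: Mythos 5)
Your proposal is correct and follows essentially the same route as the paper: induction on the length of the artinian ring, the commutative ladder associated to a small extension, injectivity via the free $T_1\otimes I$-action granted by pro-representability, and surjectivity by pushing the obstruction through the embedding $T_2\into T_2'$ and then adjusting a lift by an element of $T_1\otimes I$ pulled back along $\dd^{-1}$. The compatibility of $\eta$ with the obstruction maps that you flag as the delicate point is likewise taken for granted in the paper (it is built into the hypothesis that $\eta$ \emph{induces} the maps on $T_1$ and $T_2$), so nothing is missing.
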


\begin{proof}
We already know that $\eta_B\colon \mathrm{D}(B) \to \mathrm{D}'(B)$ is bijective when $B = \bfk$ and when $B = \bfk[t]/t^2$, by assumption. We then proceed by induction on the length of the artinian rings $A\in \Art_{\bfk}$. Fix a small extension $I\into B \onto A$ in $\Art_{\bfk}$ and form the commutative diagram
\[
\begin{tikzcd}
0\arrow{r} & T_1\otimes_{\bfk} I \arrow{r}\isoarrow{d} &
\mathrm{D}(B) \arrow{r}\arrow{d}{\eta_B} & 
\mathrm{D}(A)\arrow{r}{\ob}\isoarrow{d} & 
T_2\otimes_{\bfk} I\arrow[hook]{d} \\
0\arrow{r} & T_1'\otimes_{\bfk} I \arrow{r} &
\mathrm{D}'(B) \arrow{r} &
\mathrm{D}'(A) \arrow{r} &
T'_2\otimes_{\bfk} I
\end{tikzcd}
\]
where the leftmost vertical map is $\dd\otimes_{\bfk} \id_I$ and the isomorphism $\mathrm{D}(A) \simto \mathrm{D}'(A)$ is the induction hypothesis. We have to show that $\eta_B$ is bijective. The statement is reminiscent of the Five Lemma, but since we are dealing with the (non-standard) concept of short exact sequence of sets, we include full details.

To prove injectivity, pick two elements $\beta_1\neq \beta_2\in \mathrm{D}(B)$. We may assume their images in $\mathrm{D}(A)$ agree, for otherwise there is nothing to prove. Then, by pro-representability of $\mathrm{D}$, we have $\beta_2 = v\cdot \beta_1$ for a unique nonzero $v \in T_1\otimes_{\bfk}I$. Then, after setting $v' = (\dd\otimes_{\bfk} \id_I)(v)$, we find $\eta_B(\beta_2) = v'\cdot \eta_B(\beta_1) \neq \eta_B(\beta_1)$ since $v'\neq 0$ and $\mathrm{D'}$ is pro-representable.

% Pick two elements $\beta_1\neq \beta_2\in \mathrm{D}(B)$. Assume $\eta_B(\beta_2) = \eta_B(\beta_1)$. We have that $\beta_1$ and $\beta_2$ map to the same element of $D(A)$, and so $\beta_2 = v \cdot \beta_1$ for a unique nonzero $v \in T_1\otimes_{\bfk}I$. But then $\eta_B(\beta_1) = \eta_B(\beta_2)= \eta_B(v \cdot \beta_1) = v' \cdot \eta_B(\beta_1)$ 

To prove surjectivity, pick $\beta'\in \mathrm{D}'(B)$. It maps to  $0\in T_2'\otimes_{\bfk} I$, and its image $\alpha'$ in $\mathrm{D}'(A)$ lifts uniquely to an element  $\alpha\in \mathrm{D}(A)$ such that $\ob(\alpha)$ goes to $0\in T_2'\otimes_{\bfk} I$. But by the injectivity assumption, we have $\ob(\alpha) = 0$, i.e.~$\alpha$ lifts to some $\beta \in \mathrm{D}(B)$. But $\eta_B(\beta)$ is a lift of $\alpha' \in \mathrm{D}(A)$, so $\beta' = v'\cdot \eta_B(\beta)$ for a unique $v'$, as above. Then, if $v \in T_1\otimes_{\bfk} I$ is the preimage of $v'$, we conclude that $v\cdot \beta \in \mathrm{D}(B)$ is a preimage of $\beta'$ under $\eta_B$.
\end{proof}

%%%%%%%%%%%%%%%%%%%%%%%%%%%%%%%%%%%%%%%%%%%%%%%%%%%%
\subsection{Relating Quot scheme and framed sheaves} 
Let $\bfk$ be an algebraically closed field of characteristic $0$. Let $M=M_{\delta}^{\st}(Y;G,P)$ be a fine moduli space of $\delta$-stable framed modules (with framing datum $G$ and framed Hilbert poynomial $P$) on a smooth projective $\bfk$-variety $Y$, as in Theorem \ref{thm:HL_representability}. Fix a closed point $y_0 \in M(\bfk)$ corresponding to a framed module $(E,\alpha)$. Consider the deformation functor
\[
\mathsf{M}_{y_0}\colon \Art_{\bfk} \to \Sets
\]
defined as the subfunctor of $\CM_{\delta}^{\st}(Y;G,P)|_{\Art_{\bfk}}$ sending a local artinian $\bfk$-algebra $A$ to the set of isomorphism classes of families of $\delta$-stable framed modules $y \in \CM_{\delta}^{\st}(Y;G,P)(\Spec A)$ such that $y|_{\mathfrak{m}} = y_0$, where $\mathfrak{m}$ is the closed point of $\Spec A$. It is the \emph{local moduli functor} attached to $y_0 \in M(\bfk)$. By representability of $\CM_{\delta}^{\st}(Y;G,P)$, the functor $\mathsf{M}_{y_0}$ is pro-representable: it is isomorphic to $\Hom_{\bfk\textrm{-alg}}(\OO_{M,y_0},-)$. 

Fix $m\geq 2$. If $Y=\BP^m$, $\iota\colon D\into \BP^m$ is a hyperplane and $y_0 \in \mathrm{Fr}_{r,n}(\BP^m)(\bfk)\subset M_\delta^{\st}(\BP^m;G,P)(\bfk)$ corresponds to a $D$-framed sheaf $(E,\alpha)$ for a choice of $(\delta,G,P)$ as in Proposition \ref{prop:open_subfunctor}, we denote by $\Fram_{y_0} \subset \mathsf{M}_{y_0}$ the corresponding open subfunctor. By \cite[Thm.~4.1]{framed_modules}, the pair of vector spaces
\[
T_1 = \Ext^1(E,E(-D)),\quad T_2 = \Ext^2(E,E(-D))
\]
form a natural tangent-obstruction theory on the deformation functor $\Fram_{y_0}$.

On the other hand, we have Grothendieck's Quot functor
\[
\mathsf Q=\mathsf{Quot}_{\BP^m}(\OO^{\oplus r},n)\colon \Sch_{\bfk}^{\op}\to \Sets.
\]
It contains as an open subfunctor the Quot functor
\[
\mathsf{Quot}_{\BA^m}(\OO^{\oplus r},n) \into \mathsf Q,
\]
parametrising quotients $\OO_{\BP^m\times_{\bfk} B}^{\oplus r} \onto \mathscr Q$ such that the projection $\Supp \mathscr Q \to \BP^m$ factors through $\BA^m = \BP^m \setminus D$.

\begin{prop}
\label{prop:natural_transf}
Fix integers $m\geq 2$, $r\geq 1$ and $n\geq 0$. Then there is a morphism of $\bfk$-schemes
\[
\eta\colon \Quot_{\BA^m}(\OO^{\oplus r},n) \to \mathrm{Fr}_{r,n}(\BP^m)
\]
which is injective on geometric points, and is a bijection if $m\geq 3$ or $(m,r) = (2,1)$.
\end{prop}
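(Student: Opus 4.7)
The plan is to build $\eta$ as a natural transformation of moduli functors and then examine its behaviour on geometric points: injectivity will hold uniformly for $m\geq 2$ via a $\Hom$-vanishing argument, while surjectivity will rely on \Cref{cor:embedding_in_double_dual} for $m\geq 3$ and on a direct rank-one argument for $(m,r)=(2,1)$.

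First I would construct $\eta$. Given a $\bfk$-scheme $S$ and an $S$-family
\[
0\to \mathscr E\to \OO_{\BP^m\times_{\bfk}S}^{\oplus r}\to \mathscr Q\to 0
\]
with $\mathscr Q$ finite flat over $S$ and $\Supp\mathscr Q\subset \BA^m\times_{\bfk}S$, I need to verify the conditions defining a family of $D$-framed sheaves: $S$-flatness of $\mathscr E$ (via the long exact Tor sequence, using the flatness of $\OO^{\oplus r}$ and $\mathscr Q$), torsion-freeness and Chern character $v_{r,n}$ of the geometric fibres (standard, using $\ch(\mathscr Q_s)=(0,\ldots,0,n)$), and the framing isomorphism $\mathscr E|_{D\times_{\bfk}S}\simto \OO_{D\times_{\bfk}S}^{\oplus r}$. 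For the last point, both $\mathscr Q|_{D\times_{\bfk}S}$ and $\operatorname{Tor}_1^{\OO_{\BP^m\times_{\bfk}S}}(\mathscr Q,\OO_{D\times_{\bfk}S})$ (the kernel of multiplication by a local equation of $D$ on $\mathscr Q$) vanish because $\Supp\mathscr Q$ is disjoint from $D\times_{\bfk}S$. Representability (\Cref{prop:open_subfunctor}) then turns this natural transformation into a morphism of schemes acting on closed points by sending $E\hookrightarrow \OO_{\BP^m}^{\oplus r}$ to $(E,\, i|_D)$.

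For injectivity on geometric points, suppose two Quot points $i_k\colon E_k\hookrightarrow \OO_{\BP^m}^{\oplus r}$ ($k=1,2$) become isomorphic as framed sheaves via some $\theta\colon E_1\simto E_2$. Then $i_1$ and $i_2\circ\theta\colon E_1\to \OO_{\BP^m}^{\oplus r}$ agree after restriction to $D$, so their difference factors through $\OO^{\oplus r}(-D)=\OO^{\oplus r}(-1)$. Applying $\Hom(-,\OO(-1))$ to $0\to E_1\to\OO^{\oplus r}\to Q_1\to 0$, the leftmost term $\Hom(\OO^{\oplus r},\OO(-1))$ vanishes because $H^0(\BP^m,\OO(-1))=0$, and $\Ext^i(Q_1,\OO(-1))=0$ for $i<m$ (standard for a $0$-dimensional sheaf on a smooth $m$-fold, which applies whenever $m\geq 2$). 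Thus $\Hom(E_1,\OO(-1))=0$, forcing $i_1=i_2\circ\theta$ and hence $i_1(E_1)=i_2(E_2)$ as subsheaves of $\OO_{\BP^m}^{\oplus r}$.

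For surjectivity, I would treat the two cases separately. If $m\geq 3$, then \Cref{cor:embedding_in_double_dual} directly produces an embedding $E\hookrightarrow E^{\vee\vee}\simeq \OO_{\BP^m}^{\oplus r}$ with $0$-dimensional quotient supported in $\BA^m$; since the splitting $E^{\vee\vee}\simto \OO^{\oplus r}$ is determined only up to $\GL_r(\bfk)=\operatorname{Aut}(\OO_{\BP^m}^{\oplus r})$, and since restriction to $D$ identifies this group with $\operatorname{Aut}(\OO_D^{\oplus r})$, I can postcompose with the unique element which makes the restriction to $D$ coincide with $\alpha$. If instead $(m,r)=(2,1)$, then $E^{\vee\vee}$ is a line bundle $\OO_{\BP^2}(d)$; since $E|_D\simeq \OO_D$ is locally free, $E$ is locally free near $D$, so $E|_D=E^{\vee\vee}|_D=\OO_D(d)$, which forces $d=0$ and makes the length-$n$ scheme $Z$ cut out by the inclusion $E\hookrightarrow \OO_{\BP^2}$ lie in $\BA^2$; rescaling this inclusion by the unique element of $\bfk^*$ matches $\alpha$. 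The main technical subtlety I expect is the framing-compatibility bookkeeping---verifying that the embedding produced in the surjectivity argument really reproduces the given $\alpha$, rather than some other framing in its $\GL_r(\bfk)$-orbit---while the injectivity and the construction of $\eta$ reduce to standard flatness and $\Hom$/$\Ext$ computations.
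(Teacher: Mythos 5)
Your proposal is correct and follows essentially the same route as the paper: the same construction of $\eta$ on families (flatness of $\mathscr E$ inherited from $\mathscr Q$, framing given by restricting the inclusion to $D$), and surjectivity on $\bfk$-points via \Cref{cor:embedding_in_double_dual} for $m\geq 3$ together with the rank-one ideal-sheaf argument for $(m,r)=(2,1)$. The only difference is that you spell out two steps the paper leaves implicit --- injectivity via the vanishing $\Hom(E,\OO_{\BP^m}(-D)^{\oplus r})=0$, and the $\GL_r(\bfk)$-bookkeeping matching the splitting $E^{\vee\vee}\cong\OO_{\BP^m}^{\oplus r}$ to the given framing --- both of which are correct and welcome additions.
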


\begin{proof}
Fix a $\bfk$-scheme $B$. Consider a short exact sequence
\[
0 \to \mathscr E \xrightarrow{i} \OO^{\oplus r}_{\BP^m \times_{\bfk} B} \to \mathscr Q \to 0
\]
defining an element of  $\mathsf{Quot}_{\BA^m}(\OO^{\oplus r},n)(B) \subset \mathsf Q(B)$. This means that the image of $\Supp \mathscr Q \subset \BP^m \times_{\bfk} B \to \BP^m$ is disjoint from $D$, in particular $\mathscr Q|_{D\times_{\bfk} B} = 0$. Then we define $\eta_B\colon \mathsf{Quot}_{\BA^m}(\OO^{\oplus r},n)(B) \to \Fram_{r,n}(\BP^m)(B)$ by sending such an exact sequence to the pair $(\mathscr E,\Phi)$, where 
\[
\Phi = i|_{D\times_{\bfk} B} \colon \mathscr E|_{D\times_{\bfk} B} \simto \OO^{\oplus r}_{D \times_{\bfk} B}.
\]
Note that $\mathscr E$ is $B$-flat since $\mathscr Q$ is $B$-flat. 

Such a map is easily seen to be injective on geometric points, by definition of the Quot functor. If $m\geq 3$, we can construct the inverse of $\eta_{\bfk}$ as follows. Given a $D$-framed sheaf $(E,\phi)$, with trivialisation $\phi\colon E|_D \simto \OO_D^{\oplus r}$, we know by the proof of Corollary \ref{cor:embedding_in_double_dual} how to construct a canonical isomorphism $E^{\vee\vee} \simto \OO^{\oplus r}_{\BP^m}$.
Thus the inverse of $\eta_{\bfk}$ will send
$(E,\phi)$ to the isomorphism class of the surjection
\[
\OO^{\oplus r}_{\BP^m} \onto \OO^{\oplus r}_{\BP^m} / E.
\]
The same argument works in the isolated case $(m,r) = (2,1)$. Indeed, in that case $E = \mathscr I_Z$ is an ideal sheaf of a $0$-dimensional subscheme $Z \subset \BA^2 = \BP^{2} \setminus D$ of length $n$, and again we have $\mathscr I_Z^{\vee\vee} \simto \OO_{\BP^2}$, canonically.
The proof is complete.
\end{proof}

We will use an infinitesimal method based on \Cref{Prop:Def_Functors33} to prove that the map $\eta$ of \Cref{prop:natural_transf} is an isomorphism as long as $m\geq 3$.

%%%%%%%%%%%%%%%%%%%%%%%%%%%%%%%%%%%%%%
\subsection{Infinitesimal method}
Let $y_0 = \eta(x_0) \in \mathrm{Fr}_{r,n}(\BP^m)$ be the image of a point $x_0 \in \Quot_{\BA^m}(\OO^{\oplus r},n)$ under the morphism $\eta$. We obtain an induced natural transformation
\[
\eta_0\colon \mathsf Q_{x_0} \to \Fram_{y_0} 
\]
between the local moduli functors --- $\mathsf Q_{x_0}$ was defined in Example \ref{example:Ti_quot}. Both functors are pro-representable and carry a tangent-obstruction theory, cf.~\eqref{eqn:Ti_quot} for the case of the Quot scheme.
Our next goal is to show that $\eta_0$ is an equivalence when $m\geq 3$, using Proposition \ref{Prop:Def_Functors33}. This will be achieved by means of the following two lemmas.

\begin{lemma}\label{lemma::vanishing}
Fix $m\geq 3$ and a hyperplane $D\subset \BP^m$. Let $E$ be a torsion free sheaf on $\mathbb P^{m}$ such that $E|_{D}\cong \OO_{\BP^m}^{\oplus r}$. Then
\begin{align*}
     \HH^{m-1}\left(\mathbb P^{m}, E(-m)\right) &\,=\, 0\\
     \HH^{m}\left(\mathbb P^{m}, E(-m)\right) &\,=\, 0.
\end{align*}
If the strict inequality $m>3$ holds, then
\begin{align*}
     \HH^{m-2}\left(\mathbb P^{m}, E(-m)\right) &\,=\, 0.
\end{align*}
\end{lemma}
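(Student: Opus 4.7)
My approach is to combine \Cref{cor:embedding_in_double_dual} with the total vanishing of the cohomology of $\OO_{\BP^m}(-m)$. The hypothesis $E|_D \cong \OO_D^{\oplus r}$ forces $E$ to be locally free in a neighbourhood of $D$, so the argument of \Cref{cor:embedding_in_double_dual} --- which only uses this fact together with \Cref{thm:abe_yoshi} --- furnishes a short exact sequence
\[
0 \to E \to \OO_{\BP^m}^{\oplus r} \to Q \to 0
\]
where $Q$ is a coherent sheaf with $\Supp(Q) \subset \BA^m = \BP^m \setminus D$.

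Twisting this sequence by $\OO(-m)$ and taking cohomology, I exploit the fact that $\HH^i(\BP^m, \OO(-m)) = 0$ for every $i\geq 0$: the case $i=0$ is immediate, the case $0<i<m$ is the standard vanishing of intermediate cohomology of line bundles on projective space, and the case $i=m$ follows from Serre duality against $\HH^0(\BP^m, \OO(-1))=0$. The long exact sequence therefore collapses to canonical isomorphisms
\[
\HH^{j}(\BP^m, E(-m)) \,\simto\, \HH^{j-1}(\BP^m, Q(-m))\qquad \text{for every }j\geq 2.
\]

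To conclude, I observe that $\Supp(Q)$ is a closed subset of $\BP^m$ contained in the affine open $\BA^m$, hence is itself affine; Serre's vanishing theorem then gives $\HH^i(\BP^m, Q(-m)) = 0$ for all $i>0$, whence $\HH^{j}(\BP^m, E(-m)) = 0$ for every $j \geq 2$. The three cases of the lemma are instances of this: $j = m-1 \geq 2$ and $j = m \geq 2$ are automatic when $m\geq 3$, while $j = m-2 \geq 2$ is equivalent to $m>3$. The point requiring most care is the opening step: \Cref{cor:embedding_in_double_dual} is stated for $D$-framed sheaves with Chern character $v_{r,n}$, but its proof uses the trivialisation $E|_D \cong \OO_D^{\oplus r}$ already to produce the desired short exact sequence with $\Supp(Q) \subset \BA^m$; the Chern character hypothesis only serves to strengthen the conclusion to $\Supp(Q)$ being a finite set, which I do not need here.
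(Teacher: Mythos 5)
Your proof is correct, but it takes a genuinely different route from the paper's. The paper never invokes \Cref{cor:embedding_in_double_dual} here: it works directly with the restriction sequence $0 \to E(k-1) \to E(k) \to \iota_\ast\iota^\ast E(k) \to 0$, uses the vanishing of $\HH^{m-2}$, $\HH^{m-1}$, $\HH^{m}$ of line bundles on $D \cong \BP^{m-1}$ in the relevant range of twists, and then climbs from $k=-m$ up to the Serre vanishing range by a descending induction on the twist. You instead feed the hypotheses into the double-dual argument of \Cref{cor:embedding_in_double_dual} to produce $0 \to E \to \OO_{\BP^m}^{\oplus r} \to Q \to 0$ with $\Supp Q \subset \BA^m$, and your check that this argument only needs ``$E$ torsion free and $E|_D$ trivial'' (not the Chern character hypothesis) is the right point to worry about and is correct --- triviality of the summands of $E^{\vee\vee}$ already follows from $E^{\vee\vee}|_D \cong \OO_D^{\oplus r}$, and in fact even the finiteness of $\Supp Q$ is automatic, since a closed subscheme of $\BP^m$ contained in the affine chart $\BA^m$ is both proper and affine. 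From there the computation is cleaner than the paper's: the total vanishing of $\HH^\bullet(\BP^m,\OO(-m))$ gives $\HH^{j}(\BP^m,E(-m)) \cong \HH^{j-1}(\BP^m,Q(-m))$ at one stroke, and the latter vanishes for $j \geq 2$ because $Q$ is punctual (call this flasqueness of skyscrapers or Grothendieck--Serre vanishing on affines --- ``Serre's vanishing theorem'' is a slight misnomer, as that name usually refers to vanishing for large twists, but the content is right). The trade-off: the paper's argument is more elementary and self-contained, using only the hyperplane restriction sequence; yours leans on the Abe--Yoshinaga splitting theorem (\Cref{thm:abe_yoshi}) but yields the stronger, more transparent statement $\HH^{j}(\BP^m,E(-m))=0$ for all $j\geq 2$, from which the three cases of the lemma, including the role of the condition $m>3$, drop out immediately. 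There is no circularity, since \Cref{cor:embedding_in_double_dual} does not depend on this lemma.
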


\begin{proof}
Consider the short exact sequence of sheaves
\begin{equation}\label{eqn::exa_seq_div_fram}
    0 \to E(k-1) \to E(k) \to \iota_\ast\iota^\ast E(k)  \to 0
\end{equation}
obtained from the ideal sheaf short exact sequence of the hyperplane $D\subset \BP^m$. 
The map $E(k-1)\to E(k)$ is injective because it is locally given as multiplication by the defining equation of $D$, and the sheaf $E$ is torsion-free. Notice first that
\[
\HH^\ell(\BP^m,\iota_\ast\iota^\ast E(k)) = \HH^\ell(D,\iota^\ast E(k)) = \HH^\ell(D,\OO_D(k))^{\oplus r}.
\]
Since $D \cong \BP^{m-1}$, we have
\begin{align*}
     \HH^{m-2}\left(D, \OO_{D}(k)\right) &= 0 \,\,\,\,\,\,\textrm{ for all } k,\\
     \HH^{m-1}\left(D, \OO_{D}(k)\right) &= 0 \,\,\,\,\,\,\textrm{ if } k>-m,\\
     \HH^{m}\left(D, \OO_{D}(k)\right) &= 0 \,\,\,\,\,\,\textrm{ for all } k.
\end{align*}
The first vanishing follows by our assumption $m\geq 3$. For any $k>-m$ we then deduce the following isomorphisms from the long exact sequence in cohomology associated to \eqref{eqn::exa_seq_div_fram}:
\begin{align*}
      \HH^{m-1}\left(\mathbb P^{m}, E(k-1)\right) \simto& \HH^{m-1}\left(\mathbb P^{m}, E(k)\right),\\
      \HH^{m}\left(\mathbb P^{m} , E(k-1)\right) \simto&  \HH^{m}\left(\mathbb P^{m}, E(k)\right).
\end{align*}
Since both cohomology groups on the right hand side of the isomorphisms vanish for $k$ large enough by Serre's vanishing theorem, we deduce $\HH^{m-1}\left(\mathbb P^{m}, E(-m)\right)= \HH^{m}\left(\mathbb P^{m}, E(-m)\right) = 0$.\\
If $m>3$, then 
\begin{align*}
     \HH^{m-3}\left(D, \OO_{D}(k)\right) &= 0 \,\,\,\,\,\,\textrm{ for all } k,
\end{align*}
and applying analogously Serre's vanishing theorem we deduce $\HH^{m-2}\left(\mathbb P^{m}, E(-m)\right)=0$.
\end{proof}

\begin{lemma}\label{lemma:iso_and_inclusion}
Fix $m\geq 3$ and a hyperplane $D\subset \BP^m$. Let $(E,\alpha)$ be a $D$-framed sheaf of rank $r$ on $\mathbb P^{m}$, and let $Q = \OO_{\BP^m}^{\oplus r}/E$ be as in \eqref{eqn::exact_seq_framed}. Then there is a $\bfk$-linear isomorphism
\[
\Hom(E,Q)\simto \Ext^{1}(E,E(-D))
\]
and a $\bfk$-linear inclusion
\[
\Ext^{1}(E,Q) \into \Ext^{2}(E,E(-D)).
\]
If the strict inequality $m>3$ holds, the $\bfk$-linear inclusion is in fact an isomorphism.
\end{lemma}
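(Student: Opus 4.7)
The plan is to apply the functor $\Hom(E,-)$ to a twisted version of the short exact sequence furnished by \Cref{cor:embedding_in_double_dual}. Explicitly, twisting
$$0 \to E \to \OO_{\BP^m}^{\oplus r} \to Q \to 0$$
by $\OO_{\BP^m}(-D)$ produces
$$0 \to E(-D) \to \OO_{\BP^m}^{\oplus r}(-D) \to Q(-D) \to 0,$$
and since $Q$ is supported on finitely many points contained in $\BA^m = \BP^m\setminus D$, the inclusion $\OO_{\BP^m}(-D)\into \OO_{\BP^m}$ induces a canonical isomorphism $Q(-D)\simto Q$. Applying $\Hom(E,-)$ to the resulting exact sequence yields a long exact sequence whose connecting homomorphisms link $\Ext^{i-1}(E,Q)$ with $\Ext^{i}(E,E(-D))$ through the middle terms $\Ext^{i}(E,\OO_{\BP^m}^{\oplus r}(-D))$.

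The technical crux is therefore to establish the vanishing of these middle terms for $i=0,1$ (in all cases) and for $i=2$ (when $m>3$). By Serre duality and the identity $\omega_{\BP^m}(D) = \OO_{\BP^m}(-m)$, one has
$$\Ext^{i}(E,\OO_{\BP^m}(-D)) \,\cong\, \HH^{m-i}(\BP^m, E(-m))^\vee,$$
and \Cref{lemma::vanishing} furnishes exactly the required vanishings: $\HH^{m}(\BP^m,E(-m)) = \HH^{m-1}(\BP^m,E(-m)) = 0$ whenever $m\geq 3$, with the additional vanishing $\HH^{m-2}(\BP^m,E(-m))=0$ when $m>3$.

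Plugging these into the long exact sequence yields the fragment
$$\cdots \to \Hom(E,Q)\to \Ext^{1}(E,E(-D))\to 0 \to \Ext^{1}(E,Q)\to \Ext^{2}(E,E(-D))\to \Ext^{2}(E,\OO_{\BP^m}^{\oplus r}(-D))\to \cdots$$
from which the isomorphism $\Hom(E,Q)\simto \Ext^{1}(E,E(-D))$ (together with the connecting map being surjective onto the whole $\Ext^{1}$ since the next term vanishes) and the injection $\Ext^{1}(E,Q)\into \Ext^{2}(E,E(-D))$ can be read off directly. In the case $m>3$, the extra vanishing forces $\Ext^{2}(E,\OO_{\BP^m}^{\oplus r}(-D))=0$, so the injection is in fact an isomorphism. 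No step of the argument looks delicate to me; the only mild subtlety is the canonical identification $Q(-D)\simto Q$, which is immediate from the support condition on $Q$.
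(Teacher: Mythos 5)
Your proof is correct and follows essentially the same route as the paper: twist the defining sequence of $Q$ by $\OO_{\BP^m}(-D)$, apply $\Hom(E,-)$, identify the middle terms via Serre duality with $\HH^{m-i}(\BP^m,E(-m))^{\oplus r}$, and invoke the vanishings of \Cref{lemma::vanishing}. The only difference is that you make explicit the identification $Q(-D)\simto Q$, which the paper leaves implicit; this is a welcome clarification rather than a divergence.
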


\begin{proof}
Twisting the exact sequence \eqref{eqn::exact_seq_framed} by $\OO(-D)$ and applying the $\Hom(E,-)$ functor we obtain a long cohomology sequence 
\begin{multline*}
    \cdots \to \Hom(E,\OO_{\mathbb P^{m}}(-D)^{\oplus r}) \to \Hom(E,Q) \to \Ext^{1}(E,E(-D))  \\
    \to \Ext^{1}(E,\OO_{\mathbb P^{m}}(-D)^{\oplus r}) \to \Ext^{1}(E,Q) \to \Ext^{2}(E,E(-D)) \to \Ext^{2}(E,\OO_{\mathbb P^{m}}(-D)^{\oplus r}) \to \cdots 
\end{multline*}
and by Serre duality we have 
\begin{align*}
    \Hom(E,\OO_{\mathbb P^{m}}(-D)^{\oplus r})^\vee&\cong\HH^{m}(\mathbb P^{m},E(-m))^{\oplus r} \\
    \Ext^{i}(E,\OO_{\mathbb P^{m}}(-D)^{\oplus r})^\vee&\cong \HH^{m-i}(\mathbb P^{m},E(-m))^{\oplus r} %\\
%    \Ext^{2}(E,\OO_{\mathbb P^{m}}(-D)^{\oplus r})^\vee&\cong \HH^{m-2}(\mathbb P^{m},E(-m))^{\oplus r},
\end{align*}
for $i=1,2$, so that the result follows from the vanishings of Lemma \ref{lemma::vanishing}.
\end{proof}

We have thus essentially obtained the proof of the following result.

\begin{prop}\label{prop:eta_0_equivalence}
If $m\geq 3$, the natural transformation $\eta_0\colon \mathsf Q_{x_0} \to \Fram_{y_0}$ of local moduli functors induces an isomorphism on tangent spaces and an injection on obstruction spaces. Hence, $\eta_0$ is a natural equivalence.
\end{prop}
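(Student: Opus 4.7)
The plan is to reduce the statement directly to \Cref{Prop:Def_Functors33}. Both functors $\mathsf Q_{x_0}$ and $\Fram_{y_0}$ are pro-representable: the former by $\OO_{\Quot_{\BA^m}(\OO^{\oplus r},n), x_0}$ (since the Quot scheme represents $\mathsf Q$), the latter by $\OO_{\mathrm{Fr}_{r,n}(\BP^m), y_0}$ (since $\mathrm{Fr}_{r,n}(\BP^m)$ represents $\Fram_{r,n}(\BP^m)$ by \Cref{prop:open_subfunctor}). By \Cref{example:Ti_quot} the pair $(\Hom(E,Q),\Ext^1(E,Q))$ is a tangent-obstruction theory on $\mathsf Q_{x_0}$, and by \cite[Thm.~4.1]{framed_modules} the pair $(\Ext^1(E,E(-D)),\Ext^2(E,E(-D)))$ is one on $\Fram_{y_0}$. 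Once I check that the maps $\dd\eta_0$ on tangent spaces and $\mathrm{ob}(\eta_0)$ on obstruction spaces coincide with the cohomological maps of \Cref{lemma:iso_and_inclusion}, I can conclude via \Cref{Prop:Def_Functors33}.

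So the key step is the identification of the maps induced by $\eta_0$ with the connecting maps of the long exact sequence obtained in \Cref{lemma:iso_and_inclusion}. The natural transformation $\eta_0$ is built out of the defining short exact sequence
\[
0 \to E \to \OO_{\BP^m}^{\oplus r} \to Q \to 0,
\]
or, equivalently after twisting by $\OO(-D)$ and using $Q\cong Q(-D)$ (since $Q$ is supported away from $D$),
\[
0 \to E(-D) \to \OO_{\BP^m}(-D)^{\oplus r} \to Q \to 0.
\]
Applying $\Hom(E,-)$ and invoking the vanishings $\Ext^i(E, \OO_{\BP^m}(-D)^{\oplus r}) = 0$ for $i=0,1$ (proved via Serre duality in \Cref{lemma::vanishing}), the connecting homomorphisms of this long exact sequence produce exactly the isomorphism $\Hom(E,Q) \simto \Ext^1(E,E(-D))$ and the injection $\Ext^1(E,Q) \into \Ext^2(E,E(-D))$ of \Cref{lemma:iso_and_inclusion}.

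To verify the identification, one unwinds the standard construction of the tangent-obstruction theory on $\mathsf Q_{x_0}$. A tangent vector $\phi \in \Hom(E,Q)$ corresponds to the first-order deformation whose kernel $\tilde E \subset \OO^{\oplus r}_{\BP^m \times \Spec \bfk[t]/t^2}$ has class $\partial\phi \in \Ext^1(E,E)$ equal to the image of $\phi$ under the connecting map for $\Hom(E,-)$ of $0 \to E \to \OO^{\oplus r} \to Q \to 0$. Under $\eta_0$, the pair $(\tilde E, i|_{D\times \Spec \bfk[t]/t^2})$ defines a first-order deformation of the framed sheaf $(E,\alpha)$, whose class in $\Ext^1(E,E(-D))$ is the image of $\partial\phi$ under the map $\Ext^1(E,E) \leftarrow \Ext^1(E,E(-D))$—but under the vanishings above, this last map becomes an isomorphism and coincides with the one from \Cref{lemma:iso_and_inclusion} by the commutativity of the derived functor diagram relating the two short exact sequences $0\to E \to \OO^{\oplus r}\to Q\to 0$ and $0\to E(-D)\to E \to \iota_\ast\OO_D^{\oplus r}\to 0$. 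The same naturality argument, one degree up, produces the compatibility for obstructions.

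Granting these identifications, the hypotheses of \Cref{Prop:Def_Functors33} are satisfied, so $\eta_0$ is a natural equivalence, completing the proof. The main obstacle in this plan is the bookkeeping step of cross-checking that the $\eta_0$-induced maps on tangent and obstruction spaces do agree with the cohomological maps from \Cref{lemma:iso_and_inclusion}; this is a purely formal naturality argument, but it is the only nontrivial content beyond assembling the earlier lemmas.
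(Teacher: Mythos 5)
Your proposal follows the paper's proof exactly: both reduce to \Cref{Prop:Def_Functors33} by combining the tangent-obstruction theories of \Cref{example:Ti_quot} and \cite[Thm.~4.1]{framed_modules} with \Cref{lemma:iso_and_inclusion}, the only difference being that you try to spell out the identification of the $\eta_0$-induced maps with the connecting homomorphisms, a step the paper leaves implicit. One caveat on that extra step: your intermediate claim that $\Ext^1(E,E(-D))\to\Ext^1(E,E)$ is an isomorphism is not justified by the vanishings of \Cref{lemma::vanishing} (the relevant sequence has $\Hom(E,\iota_\ast\OO_D^{\oplus r})\cong\bfk^{r^2}\neq 0$ in the way), but it is also not needed --- the naturality of connecting maps for the morphism of short exact sequences from $0\to E(-D)\to\OO_{\BP^m}(-D)^{\oplus r}\to Q\to 0$ to $0\to E\to\OO_{\BP^m}^{\oplus r}\to Q\to 0$ already yields the compatibility you want.
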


\begin{proof}
The first statement follows from Lemma \ref{lemma:iso_and_inclusion}. The conclusion follows from Proposition \ref{Prop:Def_Functors33}.
\end{proof}

We can now finish the proof of our main result.

\begin{theorem}\label{main_thm_body}
Fix integers $m\geq 2$, $r\geq 1$ and $n\geq 0$. The morphism of schemes
\[
\eta\colon \Quot_{\BA^m}(\OO^{\oplus r},n) \to \mathrm{Fr}_{r,n}(\BP^m)
\]
constructed in Proposition \ref{prop:natural_transf} is an isomorphism if and only if $m\geq 3$ or $(m,r) = (2,1)$.
\end{theorem}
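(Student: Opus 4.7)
The plan is to split the proof into the two directions of the equivalence.

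For the \emph{if} direction in the case $m \geq 3$, I would combine the two pieces already at our disposal: \Cref{prop:natural_transf} shows that $\eta$ is bijective on closed points, while \Cref{prop:eta_0_equivalence} shows that at every closed point $x_0 \in \Quot_{\BA^m}(\OO^{\oplus r}, n)$, with image $y_0 = \eta(x_0)$, the induced natural transformation $\eta_0 \colon \mathsf Q_{x_0} \to \Fram_{y_0}$ between local moduli functors is a natural equivalence. Since both of these deformation functors are pro-representable, and pro-represented precisely by the completed local rings $\widehat{\OO}_{\Quot_{\BA^m}(\OO^{\oplus r}, n), x_0}$ and $\widehat{\OO}_{\mathrm{Fr}_{r,n}(\BP^m), y_0}$ respectively, such a natural equivalence amounts to an isomorphism of completed local rings at every matching pair of closed points. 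This makes $\eta$ étale at every closed point, hence étale; combined with bijectivity on closed points (and hence on geometric points, $\bfk$ being algebraically closed), $\eta$ is an étale, radicial and surjective morphism of finite-type $\bfk$-schemes, therefore an isomorphism.

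For the isolated case $(m, r) = (2, 1)$, I would give a direct construction of the inverse of $\eta$ at the level of functors, not just closed points. Given a flat family $(\mathscr E, \Phi)$ of $D$-framed rank-one torsion free sheaves on $\BP^2$ over a base $B$, the double-dual $\mathscr E^{\vee\vee}$ is a line bundle on $\BP^2 \times_{\bfk} B$ whose restriction to $D \times_{\bfk} B$ is trivialised by $\Phi$. Since $\Pic(\BP^2) \to \Pic(D)$ is injective and $B$-flatness of the quotient follows from $B$-flatness of $\mathscr E$, this trivialisation propagates canonically over $\BP^2 \times_{\bfk} B$ and produces a canonical injection $\mathscr E \hookrightarrow \OO_{\BP^2 \times_{\bfk} B}$ defining an element of $\Quot_{\BA^2}(\OO, n)(B)$. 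This gives a scheme-theoretic inverse to $\eta$.

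For the \emph{only if} direction we must show $\eta$ is not an isomorphism when $m = 2$ and $r \geq 2$ (the case $n = 0$ being vacuous). The infinitesimal method of \Cref{prop:eta_0_equivalence} fails here, since the vanishings of \Cref{lemma::vanishing} no longer hold on a surface. Instead I would argue by a dimension count: the moduli space $\mathrm{Fr}_{r,n}(\BP^2)$ is smooth of dimension $2rn$ by classical results of Huybrechts--Lehn and Nakajima, whereas $\Quot_{\BA^2}(\OO^{\oplus r}, n)$ has dimension $(r+1)n$ (cf.~\Cref{ex:singular_quot}). For $r \geq 2$ and $n \geq 1$ these dimensions differ, so $\eta$ realises the source as a proper subvariety of the target and cannot be surjective.

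The main technical obstacle, already dispatched in the previous sections, is the cohomological vanishing of \Cref{lemma::vanishing} that underpins the tangent--obstruction comparison; the remaining work for the first case is the standard passage from "equivalence of pro-representable deformation functors" to "isomorphism of finite-type schemes", and the second and third cases are then direct.
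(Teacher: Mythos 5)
Your treatment of the main case $m\geq 3$ is essentially the paper's argument: both proofs combine the bijectivity of \Cref{prop:natural_transf} with the equivalence of local moduli functors from \Cref{prop:eta_0_equivalence}, and use pro-representability to convert that equivalence into \'etaleness of $\eta$ at each closed point (the paper phrases this via the formal lifting criterion for square-zero extensions of fat points, you via isomorphisms of completed local rings; these are interchangeable). Your handling of the ``only if'' direction by comparing $\dim \mathrm{Fr}_{r,n}(\BP^2)=2rn$ with $\dim\Quot_{\BA^2}(\OO^{\oplus r},n)=(r+1)n$ is also exactly what the paper does in its discussion of the $2$-dimensional case.

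The one place where you genuinely diverge is the isolated case $(m,r)=(2,1)$, and there your sketch has a soft spot. The paper avoids any family-level construction: it observes that both $\Hilb^n(\BA^2)$ and $\mathrm{Fr}_{1,n}(\BP^2)$ are smooth and irreducible of dimension $2n$, so the bijective morphism $\eta$ is an isomorphism by Zariski's main theorem (alternatively it cites Nakajima's Theorem 2.1). You instead propose to build a functorial inverse by passing to $\mathscr E^{\vee\vee}$ over $\BP^2\times_\bfk B$. This is where care is needed: for an arbitrary (possibly non-reduced) base $B$, the double dual of a $B$-flat family of rank-one torsion-free sheaves need not be $B$-flat, need not commute with restriction to fibres, and need not be a line bundle on the total space, so the assertion that ``$\mathscr E^{\vee\vee}$ is a line bundle trivialised by $\Phi$'' does not come for free. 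One can repair this (e.g.\ via the determinant line bundle of $\mathscr E$, or via an ADHM/monad description as in Nakajima), but as written the step is a gap; the paper's ZMT argument sidesteps the issue entirely and is the cheaper route given that bijectivity and smoothness are already available.
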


\begin{proof}
The case $(m,r) = (2,1)$ is proved in \cite[Thm.~2.1]{Nakajima}. However, a direct argument is as follows: for fixed $n$, both schemes are smooth and irreducible of dimension $2n$, so since $\eta\colon \Hilb^n(\BA^2) \to \mathrm{Fr}_{1,n}(\BP^2)$ is bijective (\Cref{prop:natural_transf}), it has to be an isomorphism by Zariski's main theorem.

Assume $m\geq 3$ for the rest of the proof.
The morphism $\eta$ is locally of finite type, since the Quot scheme is of finite type. Next, we check that $\eta$ is formally \'etale, using the infinitesimal criterion. Consider a square zero extension $S \into \overline S$ of fat points (i.e.~spectra of objects $A$, $B$ of $\Art_{\bfk}$), denote by $\mathfrak m$ the closed point of $S$ and form a commutative diagram
\[
\begin{tikzcd}
S \arrow[hook]{d}{i}\arrow{r}{h} & \Quot_{\BA^m}(\OO^{\oplus r},n) \arrow{d}{\eta} \\
\overline{S}\arrow[dotted]{ur}[description]{u} \arrow[swap]{r}{\overline h} & \mathrm{Fr}_{r,n}(\BP^m)
\end{tikzcd}
\]
where the dotted arrow $u$ is the unique extension of $h$ we have to find in order to establish formal \'etaleness of $\eta$ at $x_0 = h(\mathfrak m) \mapsto y_0 = \overline{h}(\mathfrak m)$. We shall use the notation $\Hom_p(T,Y)$, for $T$ a fat point and $p$ a point on a scheme $Y$, to indicate the set of morphisms $T\to Y$ sending the closed point to $p\in Y$. Using pro-representability of $\mathsf{Q}_{x_0}$ and $\Fram_{y_0}$, the condition that $\eta_0$ is a natural equivalence (proved in Proposition \ref{prop:eta_0_equivalence}) translates into a commutative diagram
\[
\begin{tikzcd}
\Hom_{x_0}(\overline S,\Quot_{\BA^m}(\OO^{\oplus r},n)) \isoarrow{d}\arrow{r}{\circ i} & \Hom_{x_0}(S,\Quot_{\BA^m}(\OO^{\oplus r},n)) \isoarrow{d} \\
\Hom_{y_0}(\overline{S},\mathrm{Fr}_{r,n}(\BP^m)) \arrow{r}{\circ i} & \Hom_{y_0}(S,\mathrm{Fr}_{r,n}(\BP^m))
\end{tikzcd}
\]
where the vertical maps are the isomorphisms $\eta_{0,\overline{S}}$ and  $\eta_{0,{S}}$ respectively. Since $\overline h \in \Hom_{y_0}(\overline S,\mathrm{Fr}_{r,n}(\BP^m))$ lifts to a map 
\[
u \,\in\, \Hom_{x_0}(\overline S,\Quot_{\BA^m}(\OO^{\oplus r},n))
\]
and both $u \circ i$ and $h$ map to $\eta \circ h \in \Hom_{y_0}(S,\mathrm{Fr}_{r,n}(\BP^m))$, they must be equal, since the vertical map on the right is also a bijection. Thus $u$ is the unique lift we wanted to find. 

We conclude that $\eta$ is \'etale. Since it is bijective by Proposition \ref{prop:natural_transf}, it is an isomorphism.
\end{proof}

\begin{remark}
If $m=2$, we still have $\Hom(E,\OO_{\BP^m}(-D)) \cong \HH^2(\BP^2,E(-2))^\vee = 0$, inducing a (proper) linear inclusion
\[
\Hom(E,Q) \into \Ext^1(E,E(-D)),
\]
but $\Ext^1(E,\OO_{\BP^2}(-D))\cong \HH^1(\BP^2,E(-2))^\vee \cong \bfk^n$ does not vanish.
\end{remark}

\begin{remark}
We thank A.~Henni for suggesting that it might also be possible to give a proof of \Cref{main_thm} combining the formalism of perfect extended monads \cite{Henni_Quot,Jardim} with the result of Abe--Yoshinaga (\Cref{thm:abe_yoshi}). The $3$-dimensional case is also studied along these lines in \cite[Sec.~2.1.2]{Cazzaniga_Thesis}.
\end{remark}

\begin{corollary}\label{coro:critical}
The scheme $\mathrm{Fr}_{r,n}(\BP^3)$ is a global critical locus, i.e.~it can be written as the scheme-theoretic zero locus of an exact $1$-form $\dd f$, where $f$ is a function on a smooth variety $U_{r,n,3}$.
\end{corollary}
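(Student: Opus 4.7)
The plan is to transport the statement to the Quot side via Theorem \ref{main_thm_body}, which provides an isomorphism $\mathrm{Fr}_{r,n}(\BP^3)\cong \Quot_{\BA^3}(\OO^{\oplus r},n)$, and then to realise the Quot scheme as a critical locus via its description as a moduli space of representations of the $r$-framed $3$-loop quiver subject to commutativity relations, as recalled in \Cref{sec:quiver_gauge} and worked out in \cite{BR18}.

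First I would construct the smooth ambient variety. Set
\[
N \defeq \End_{\bfk}(\bfk^n)^{\oplus 3}\oplus \Hom_{\bfk}(\bfk^r,\bfk^n),
\]
with its natural $\GL_n$-action (conjugation on the three endomorphisms and left multiplication on the framing), and let $N^{\crr}\subset N$ be the open subvariety of cyclic tuples $(A,B,C,v)$, meaning those for which $v(\bfk^r)$ generates $\bfk^n$ as a module over $\bfk\langle A,B,C\rangle$. A direct stabiliser computation shows that $\GL_n$ acts freely on $N^{\crr}$, and cyclicity is moreover a GIT-stability condition with respect to the character $\det$ of $\GL_n$, so the geometric quotient
\[
U_{r,n,3}\defeq N^{\crr}/\GL_n
\]
exists as a smooth quasiprojective $\bfk$-variety.

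Next I would introduce the Chern--Simons type potential $f(A,B,C,v) = \Tr\bigl(A[B,C]\bigr)$, which is $\GL_n$-invariant by cyclicity of the trace and therefore descends to a regular function $f\colon U_{r,n,3}\to \BA^1$. A direct calculation yields
\[
\partial_A f = [B,C],\qquad \partial_B f = [C,A],\qquad \partial_C f = [A,B],\qquad \partial_v f = 0,
\]
so the scheme-theoretic zero locus of $\dd f$ on $U_{r,n,3}$ is exactly the open subscheme of cyclic representations of the $3$-loop framed quiver satisfying the commutation relations $[A,B]=[B,C]=[C,A]=0$.

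The main step is the scheme-theoretic identification of this critical locus with the Quot scheme. A cyclic tuple $(A,B,C,v)\in N^{\crr}$ with pairwise commuting $(A,B,C)$ endows $\bfk^n$ with a $\bfk[x,y,z]$-module structure, and $v$ then defines a surjection $\OO_{\BA^3}^{\oplus r}\onto \bfk^n$ of colength $n$; conversely every such quotient arises in this way after choosing a basis of the target, and this correspondence is functorial in test schemes. Hence $\crit(f) = \Quot_{\BA^3}(\OO^{\oplus r},n)$ as subschemes of $U_{r,n,3}$, which, combined with Theorem \ref{main_thm_body}, exhibits $\mathrm{Fr}_{r,n}(\BP^3)$ as a global critical locus. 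The potentially delicate point is to ensure that this identification is scheme-theoretic rather than merely set-theoretic; this is precisely the content of \cite{BR18}, to which I would refer for the universal Quot-theoretic verification.
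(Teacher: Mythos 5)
Your proposal is correct and follows essentially the same route as the paper: the paper's proof is precisely the combination of Theorem \ref{main_thm_body} with the critical-locus description of $\Quot_{\BA^3}(\OO^{\oplus r},n)$ inside $U_{r,n,3}$ via the potential $f=\Tr A_1[A_2,A_3]$ from \cite[Thm.~2.6]{BR18}, whose content you have simply spelled out (and correctly deferred the scheme-theoretic identification of $\crit(f)$ with the Quot scheme to that reference).
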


\begin{proof}
This follows by combining \Cref{main_thm_body} with \cite[Thm.~2.6]{BR18}, which works over an arbitrary algebraically closed field of characteristic $0$. The pair $(U_{r,n,3},f)$ will be given in Remark \ref{3d_case}. 
\end{proof}

\begin{remark}
Another Quot scheme on $\BA^3$ that has been recently proven to be a global critical locus is $\Quot_{\BA^3}(\mathscr I_L,n)$, where $\mathscr I_L \subset \BC[x,y,z]$ is the ideal sheaf of a line $L \subset \BA^3$ \cite{DavisonR}. This was the starting point for the motivic refinement of the local DT/PT (or, ideal sheaves/stable pairs) correspondence around a smooth curve in a $3$-fold \cite{LocalDT,Ricolfi2018}.
\end{remark}

Set $\bfk = \BC$. By Oprea's construction \cite[Thm.~1 and Sec.~4.4]{oprea2013:framed_threefolds}, there exists a symmetric perfect obstruction theory
\[
\BE = \RR \pi_\ast \RRlHom(\mathscr E(-D),\mathscr E\otimes \omega_{\pi})[2] \to \BL_{\mathrm{Fr}_{r,n}(\BP^3)}
\]
on $\mathrm{Fr}_{r,n}(\BP^3)$, where $\pi\colon \BP^3 \times_{\BC} \mathrm{Fr}_{r,n}(\BP^3)\to \mathrm{Fr}_{r,n}(\BP^3)$ is the projection, $(\mathscr E,\Phi)$ is the universal framed sheaf, and $\BL$ denotes the truncated cotangent complex. On the other hand, the critical locus structure on the Quot scheme \cite[Thm.~2.6]{BR18}
\[
\mathrm{Q} = \Quot_{\BA^3}(\OO^{\oplus r},n) = \set{\dd f = 0} \subset U = U_{r,n,3}
\]
induces a canonical `critical' symmetric perfect obstruction theory
\[
\BE_{\crit} = \bigl[ T_{U}\big|_{\mathrm{Q}}\xrightarrow{\Hess(f)}\Omega_{U}\big|_{\mathrm{Q}}\bigr] \to \BL_{\mathrm{Q}}.
\]
See \cite{BFHilb} for background on symmetric obstruction theories. See also \cite{Oprea:2019ab,Virtual_Quot} for the construction of virtual fundamental classes on several Quot schemes for varieties of dimension at most $3$.

We propose the following conjecture, essentially a higher rank version of \cite[Conj.~9.9]{FMR_K-DT}.

\begin{conjecture}\label{conj:pot}
The isomorphism $\eta$ of Theorem \ref{main_thm_body} induces an isomorphism of perfect obstruction theories
\[
\BE_{\crit} \,\cong\,\eta^\ast \BE
\]
over the truncated cotangent complex of $\Quot_{\BA^3}(\OO^{\oplus r},n)$.
\end{conjecture}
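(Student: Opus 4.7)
My strategy is to compare the two complexes directly, using the critical locus presentation $\mathrm{Q} = \set{\dd f = 0} \subset U_{r,n,3}$ of \cite[Thm.~2.6]{BR18} as the bridge, where throughout $\mathrm{Q} = \Quot_{\BA^3}(\OO^{\oplus r},n)$. As a first step, I would globalise Lemma \ref{lemma:iso_and_inclusion} over $\mathrm{Q}$ starting from the universal short exact sequence
\begin{equation*}
0 \to \mathscr E \to \OO^{\oplus r}_{\BP^3\times \mathrm{Q}} \to \mathscr Q \to 0.
\end{equation*}
Applying $\RR \pi_\ast \RRlHom(\mathscr E, -\otimes \omega_\pi)$ after the $-\otimes \OO(-D)$ twist, and using relative Serre duality together with a family version of the vanishings in Lemma \ref{lemma::vanishing}, one should produce a quasi-isomorphism
\begin{equation*}
\eta^\ast \BE \,\simeq\, \RR p_\ast \RRlHom(\mathscr Q,\mathscr Q)[-1]
\end{equation*}
in the appropriate trace-free truncation, where $p\colon \BA^3 \times \mathrm{Q} \to \mathrm{Q}$. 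The right-hand side is the standard complex computing sheaf-deformations of the universal length-$n$ quotient $\mathscr Q$, now globally on $\mathrm{Q}$.

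Second, I would recognise this complex in quiver terms. By flatness of $\mathscr Q$ over $\mathrm{Q}$, the pushforward $V = p_\ast \mathscr Q$ is a rank-$n$ locally free sheaf on $\mathrm{Q}$ equipped with three pairwise commuting endomorphisms $A_1,A_2,A_3$ (multiplication by the coordinates on $\BA^3$) and a framing $s\colon \OO_{\mathrm{Q}}^{\oplus r} \to V$; this is precisely the restriction to $\mathrm{Q} \subset U_{r,n,3}$ of the tautological universal representation on $U_{r,n,3}$. The Koszul resolution of $\mathscr Q$ induced by the $A_i$ should then identify $\RR p_\ast \RRlHom(\mathscr Q,\mathscr Q)$, after shift and trace-free truncation, with the explicit two-term complex $[T_{U_{r,n,3}}|_{\mathrm{Q}} \xrightarrow{\Hess(f)} \Omega_{U_{r,n,3}}|_{\mathrm{Q}}]$, where $\Hess(f)$ for $f = \tr(A_3[A_1,A_2])$ recovers the commutator pairing encoded in the $3$-Calabi-Yau structure of the Ginzburg algebra of the $r$-framed $3$-loop quiver.

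The main obstacle is the third, genuinely delicate step: promoting these identifications to an honest morphism in $\Db(\mathrm{Q})$ which commutes with the two obstruction-theory maps to $\BL_{\mathrm{Q}}$. Concretely, one must verify that the relative Atiyah class $\At(\mathscr E) \in \Ext^1(\mathscr E, \mathscr E \otimes \BL_\pi)$, which induces Oprea's obstruction-theory map, corresponds under the identifications above to the differential $\dd f$ on $U_{r,n,3}$ inducing $\BE_{\crit}\to \BL_{\mathrm{Q}}$. I expect this to reduce to a diagram chase comparing $\At(\mathscr Q)$ on $\BA^3 \times \mathrm{Q}$ with the tautological matrices $A_i$ on $U_{r,n,3}$, but the bookkeeping needed to preserve the symmetric structure on both sides simultaneously is the technical heart of the conjecture. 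The rank-one analogue \cite[Conj.~9.9]{FMR_K-DT} should serve as a blueprint for the general case.
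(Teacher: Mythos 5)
This statement is \Cref{conj:pot}, which the paper states as an open \emph{conjecture} and does not prove; there is therefore no proof in the paper to compare your attempt against, and your proposal should be judged on its own as a purported proof. As written it is not one: you yourself defer the decisive step --- showing that the identification of complexes commutes with the two obstruction-theory maps to $\BL_{\mathrm{Q}}$, i.e.\ that the relative Atiyah class of $\mathscr E$ matches $\dd f$ under your chain of quasi-isomorphisms --- and label it ``the technical heart of the conjecture.'' An outline that leaves exactly the content of the conjecture unverified is a research plan, not a proof. Note also that an isomorphism of perfect obstruction theories requires more than pointwise agreement of tangent and obstruction spaces: one must produce a genuine map in $\Db(\mathrm{Q})$ over $\BL_{\mathrm{Q}}$ compatible with the symmetric (self-dual) structures, which is precisely what is missing.

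Beyond the acknowledged gap, two of your intermediate claims are problematic. First, the asserted quasi-isomorphism $\eta^\ast \BE \simeq \RR p_\ast \RRlHom(\mathscr Q,\mathscr Q)[-1]$ cannot be correct for $r>1$ in any trace-free truncation: at a point of $\mathrm{Q}$ corresponding to $n$ distinct reduced points the relevant cohomology of $\RR\Hom(Q,Q)$ is $\Ext^1(Q,Q)\cong \BC^{3n}$, whereas the tangent space of $\Quot_{\BA^3}(\OO^{\oplus r},n)$ there is $\Hom(E,Q)\cong \BC^{(r+2)n}$; the complex $\RRlHom(\mathscr Q,\mathscr Q)$ simply does not see the deformations of the framing vectors, and no truncation can add them. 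The correct intermediate object must incorporate the surjection $\OO^{\oplus r}\onto \mathscr Q$ (e.g.\ via a cone on $\RR p_\ast\RRlHom(\mathscr Q,\mathscr Q)\to \RR p_\ast\RRlHom(\OO^{\oplus r},\mathscr Q)$, as in the framed quiver/K-theoretic DT formalism), and this changes the Koszul computation in your second step. Second, you cannot import \Cref{lemma:iso_and_inclusion} wholesale: for $m=3$ it only furnishes an \emph{injection} $\Ext^1(E,Q)\into \Ext^2(E,E(-D))$, since $\HH^1(\BP^3,E(-3))$ need not vanish, so the paper's lemmas do not by themselves identify the degree-one cohomologies of the two complexes; any argument that they agree must come from the symmetry of both theories together with the (unproven) compatibility over $\BL_{\mathrm{Q}}$, which risks circularity. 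The overall strategy --- globalise, pass through a universal deformation complex, compare with the Hessian of $\mathrm{Tr}\,A_1[A_2,A_3]$, and check Atiyah-class compatibility following the $r=1$ blueprint of \cite[Conj.~9.9]{FMR_K-DT} --- is reasonable, but each of these corrections would need to be made before the plan could become a proof.
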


\section{Relation to quiver gauge theories}
\label{sec:quiver_gauge}

In this section we set $\bfk = \BC$, essentially to be coherent with the literature on the subject. 
We start by recalling the explicit description of the Quot scheme as a closed subscheme of a nonsingular variety, the so-called \emph{non-commutative Quot scheme}, which can be seen as the moduli space of stable $r$-framed representations on a quiver (Figure \ref{fig:framed_quiver}); the relations cutting out $\Quot_{\mathbb A^m}(\mathscr O^{\oplus r},n)$ are precisely given by annihilating the commutators between all the matrices arising from the $m$ loops in the quiver. This story is particularly rich in the case $m=3$, where such relations agree with a \emph{single} vanishing relation `$\dd f = 0$' (Remark \ref{3d_case}). We emphasise this since it is the starting point of higher rank Donaldson--Thomas theory of points in all its flavours: enumerative \cite{BR18,Virtual_Quot}, motivic \cite{refinedDT_asymptotics,ricolfi2019motive}, K-theoretic \cite{FMR_K-DT}.

We conclude this final section by stressing the dichotomy between the case $m=2$ and the case $m\geq 3$. More precisely, in \Cref{sec:2-dim} we exhibit the equations cutting out $\Quot_{\mathbb A^2}(\mathscr O^{\oplus r},n)$ inside the moduli space $\mathrm{Fr}_{r,n}(\mathbb P^2)$ of framed sheaves on $\mathbb P^2$. In the case of higher rank $r>1$, the describes $\Quot_{\mathbb A^2}(\mathscr O^{\oplus r},n)$ as a closed singular subvariety of $\mathrm{Fr}_{r,n}(\mathbb P^2)$ of codimension $n(r-1)$.

\subsection{Embedding in the non-commutative Quot scheme}
\label{subsec:ncquot}

The Quot scheme
\[
\Quot_{\BA^m}(\OO^{\oplus r},n)
\]
can be embedded in a smooth quasiprojective variety $U_{r,n,m}$, called the \emph{non-commutative Quot scheme} in \cite{BR18,FMR_K-DT}, as follows. Consider the $m$-loop quiver, i.e.~the quiver $L_m$ with one vertex `$0$' and $m$ loops. Now consider the quiver $\widetilde{L}_m$ obtained by adding one additional vertex `$\infty$' along with $r$ edges $\infty \to 0$ (see Figure \ref{fig:framed_quiver}). This construction is called $r$-\emph{framing} --- for $m=3$ it has some relevance in motivic Donaldson--Thomas theory \cite{refinedDT_asymptotics,Cazzaniga:2020aa} and K-theoretic Donaldson--Thomas theory \cite{FMR_K-DT}. It is also performed with care in \cite{Jardim} in the $r=1$ case and in \cite{Henni_Quot} for arbitrary $r$.

\begin{figure}[ht]
\centering
\begin{tikzpicture}[>=stealth,->,shorten >=2pt,looseness=.5,auto]
  \matrix [matrix of math nodes,
           column sep={3cm,between origins},
           row sep={3cm,between origins},
           nodes={circle, draw, minimum size=5.5mm}]
{ 
|(A)| \infty & |(B)| 0 \\         
};
\tikzstyle{every node}=[font=\small\itshape]

\node [anchor=west,right] at (-0.2,0.13) {$\vdots$};
\node [anchor=west,right] at (-0.3,0.9) {$v_1$};              
\node [anchor=west,right] at (-0.3,-0.8) {$v_r$}; 
\node [anchor=west,right] at (1.9,0.7) {$A_1$}; 
\node [anchor=west,right] at (1.9,-0.65) {$A_m$}; 
\node [anchor=west,right] at (2,0.13) {$\vdots$}; 

\draw (A) to [bend left=25,looseness=1] (B) node [midway,above] {};
\draw (A) to [bend left=40,looseness=1] (B) node [midway] {};
\draw (A) to [bend right=35,looseness=1] (B) node [midway,below] {};
\draw (B) to [out=30,in=60,looseness=8] (B) node {};
\draw (B) to [out=330,in=300,looseness=8] (B) node {};
\end{tikzpicture}
\caption{The $r$-framed $m$-loop quiver $\widetilde{L}_m$.}\label{fig:framed_quiver}
\end{figure}
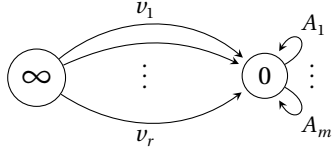

The space of representations of $\widetilde{L}_m$ of dimension vector $(n,1)$ is the affine space
\[
\Rep_{(n,1)}(\widetilde{L}_m) = \End(\BC^n)^{\oplus m}\oplus \Hom(\BC,\BC^n)^{\oplus r}
\]
of dimension $mn^2+rn$. Now consider the open subscheme
\[
W_{r,n,m} \subset \Rep_{(n,1)}(\widetilde{L}_m)
\]
consisting of those tuples $(A_1,\ldots,A_m,v_1,\ldots,v_r)$ for which the vectors generate the underlying representation $(A_1,\ldots,A_m) \in \Rep_n(L_m)$ of the $m$-loop quiver. Explicitly, this means that 
\[
\dim_{\BC} \Span \Set{A_1^{\alpha_1}\cdots A_m^{\alpha_m}\cdot v_i|\alpha_i\geq 0,1\leq \ell \leq r} = n.
\]
Of course, $W_{r,n,m}$ could be defined without reference to quivers, but it is interesting to notice that there exists a quiver stability condition $\theta$ on $L_m$ such that the open subscheme of $\Rep_{(n,1)}(\widetilde{L}_m)$ consisting of $\theta$-stable representations is precisely $W_{r,n,m}$. The gauge group $\GL_n$ acts freely on the smooth quasi-affine scheme $W_{r,n,m}$, by conjugation on the matrices and via the natural action on the vectors. Therefore the quotient
\[
U_{r,n,m} = W_{r,n,m}/\GL_n
\]
is a smooth quasiprojective variety, of dimension $(m-1)n^2+rn$. The Quot scheme is realised as the closed subscheme
\begin{equation}\label{inclusion_quot_in_NCquot}
\Quot_{\BA^m}(\OO^{\oplus r},n) \subset U_{r,n,m}
\end{equation}
cut out as the locus where the $m$ matrices commute, i.e.~by the vanishing relations
\[
[A_i,A_j] = 0,\quad 1\leq i<j\leq m.
\]

\begin{remark}\label{3d_case}
If $m=3$, then the inclusion \eqref{inclusion_quot_in_NCquot} is cut out scheme-theoretically by the single relation
\[
\dd f = 0,
\]
where $f \in \Gamma(U_{r,n,3},\OO)$ is the function $(A_1,A_2,A_3,v_1,\ldots,v_r)\mapsto \Tr A_1[A_2,A_3]$, see \cite[Thm.~2.6]{BR18}.
\end{remark}

\begin{remark}
The scheme $\Quot_{\BA^m}(\OO^{\oplus r},1)$ is smooth of dimension $m-1+r$, because it is equal to $U_{r,1,m}$. If $m=1$, all Quot schemes $\Quot_{\BA^1}(\OO^{\oplus r},n)$ are smooth.  If $r=1$, then the Quot scheme is just the Hilbert scheme of points $\Hilb^n\BA^m$, which is nonsingular (of dimension $mn$) if and only if $m\leq 2$ or $n\leq 3$. Finally, if $m\geq 2$ and $r\geq 2$, the Quot scheme $\Quot_{\BA^m}(\OO^{\oplus r},n)$ is in general singular, as Example \ref{ex:singular_quot} shows.
\end{remark}

\subsection{The 2-dimensional case}\label{sec:2-dim}

The following example shows that the Quot scheme of a surface, such as $\BA^2$, is often singular.

\begin{example}\label{ex:singular_quot}
Let $S$ be a smooth surface, $p \in S$ a point, and fix $n = r > 1$. Consider a quotient 
\[
\xi = \bigl[\OO_S^{\oplus r} \onto \OO_p^{\oplus r}\bigr]\,\in\,\Quot_S(\OO_S^{\oplus r},r).
\]
Then the tangent space to $\Quot_S(\OO_S^{\oplus r},r)$ at $\xi$ is given by
\[
\Hom(\mathscr I_p^{\oplus r},\OO_p^{\oplus r}) = \Hom(\mathscr I_p,\OO_p)^{\oplus r^2} \,\cong\, \BC^{2r^2},
\]
using that $\Hom(\mathscr I_p,\OO_p)$ is $2$-dimensional, being the tangent space to the smooth scheme $\Hilb^1S = S$ at $p$.
On the other hand, the Quot scheme $\Quot_{\BA^2}(\OO^{\oplus r},n)$ is irreducible of dimension $(r+1)n$, as was proven by Ellingsrud and Lehn \cite{Ellingsrud_Lehn}. Since $2r^2>(r+1)r$, the point $\xi$ is a singular point.
\end{example}

In the case of $\BP^2$, we already mentioned that Theorem \ref{main_thm} does not hold (unless $r=1$). In this case, we do have a closed immersion
\begin{equation}\label{Quot_in_Framed}
\Quot_{\BA^2}(\OO^{\oplus r},n) \into \mathrm{Fr}_{r,n}(\BP^2)
\end{equation}
of codimension $n(r-1)$, which is an isomorphism if and only if $r=1$.
The moduli space of framed sheaves is smooth and irreducible of dimension $2nr$, and can be realised as
\[
\Set{(B_1,B_2,i,j)\,|
\begin{array}{c}
[B_1,B_2]+ij=0\textrm{, and there is no subspace} \\
S\subsetneq \BC^n\textrm{ such that }B_\alpha(S)\subset S\textrm{ and }\mathrm{im}\,i \subset S
\end{array}
} \Bigg{/} \GL_n,
\]
where $B_i \in \End(\BC^n)$, $i \in \Hom(\BC^r,\BC^n)$ and $j \in \Hom(\BC^n,\BC^r)$.
See \cite[Thm.~2.1]{Nakajima} and the references therein. The inclusion \eqref{Quot_in_Framed} is obtained as the locus $j = 0$. In particular, $\Quot_{\BA^2}(\OO^{\oplus r},n)$ is a (singular) scheme, cut out as the zero locus of a section of a tautological bundle of rank $nr$ on the smooth quiver variety $\mathrm{Fr}_{r,n}(\BP^2)$.

\bibliographystyle{amsplain-nodash}
\bibliography{bib}
\end{document}